\titleformat{\section}[block]{\Large\center\sc}{\arabic{section}}{0.5em}{}[]
\theoremstyle{plain}
\newtheorem{theorem}{Theorem}[section]
\newtheorem{lemma}[theorem]{Lemma}
\newtheorem{proposition}[theorem]{Proposition}
\theoremstyle{definition}
\newtheorem{definition}[theorem]{Definition}
\theoremstyle{remark}
\newtheorem{remark}[theorem]{Remark}
\let\oldsection\section
\renewcommand\section{\setcounter{equation}{0}\oldsection}
\def\be{\begin{equation}}
\def\ee{\end{equation}}
\def\bes{\begin{equation*}}
\def\ees{\end{equation*}}
\def\bs{\begin{split}}
\def\es{\end{split}}
\def\bali{\begin{aligned}}
\def\eali{\end{aligned}}
\newcommand{\pf}{\noindent {\bf Proof. \hspace{2mm}}}
\def\bR{{\mathbb R}}
\def\al{\alpha}
\def\e{\epsilon}
\def\la{\lambda}
\def\t{\tilde}
\def\th{\theta}
\def\g{\gamma}
\def\dl{\delta}
\def\lt{\left}
\def\rt{\right}
\def\ls{\lesssim}
\def\i{\infty}
\def\p{\partial}
\def\f{\frac}
\def\s{\sqrt}
\def\q{\quad}
\def\qq{\qquad}
\def\angt{\langle t\rangle}
\def\angr{\langle r \rangle}
\def\bN{\mathbb N}
\begin{document}

\title{\normalsize\bf GLOBAL TANGENTIALLY ANALYTICAL SOLUTIONS OF THE 3D AXIALLY SYMMETRIC PRANDTL EQUATIONS}

%Global tangentially analytical solutions of the 3D axially symmetric Prandtl equations
\author{\normalsize\sc Xinghong Pan and Chao-Jiang Xu}

\date{}

\maketitle

\begin{abstract}

In this paper, we will prove the global existence of solutions to the three dimensional axially symmetric Prandtl boundary layer equations with small initial data, which lies in $H^1$ Sobolev space with respect to the normal variable and is analytical with respect to the tangential variables. Proof of the main result relies on the construction of a tangentially weighted analytic energy functional, which acts on a specially designed good unknown. The constructed energy functional can find its two dimensional parallel in Ignatova-Vicol \cite{IV:2016ARMA} where no tangential weight is introduced and the specially good unknown is set to control the lower bound of the analytical radius, whose two dimensional similarity can be traced to Paicu-Zhang \cite{PZ:2021ARMA}. Our result is an improvement of that in Ignatova-Vicol \cite{IV:2016ARMA} from the almost global existence to the global existence and an extension of that in Paicu-Zhang \cite{PZ:2021ARMA} from the two dimensional case to the three dimensional axially symmetric case.

\medskip

{\sc Keywords:} global existence, tangentially analytical solutions, axially symmetric, Prandtl equations

{\sc Mathematical Subject Classification 2020:} 35Q35, 76D03.

\end{abstract}

%\tableofcontents

\section{Introduction}

\q\ The main purpose of this paper is to study the well-posedness of the initial-boundary value problem for the three dimensional axially symmetric Prandtl boundary layer equations in the domain $\{(t, x, y, z)\in \mathbb{R}^4; \ t>0, (x, y) \in \bR^2, z>0\}$.

The general three dimensional Prandtl boundary layer equations read as follows,

\be\label{3dprandtl}
\lt\{
\bali
&\partial_{t} \t{u}+\left(\t{u} \partial_{x}+\t{v} \partial_{y}+\t{w} \partial_{z}\right) \t{u}+\partial_{x} p=\partial_{z}^{2} \t{u}, \\
&\partial_{t} \t{v}+\left(\t{u} \partial_{x}+\t{v} \partial_{y}+\t{w} \partial_{z}\right) \t{v}+\partial_{y} p=\partial_{z}^{2} \t{v}, \\
&\partial_{x} \t{u}+\partial_{y} \t{v}+\partial_{z} \t{ w}=0,\\
&(\t{u}, \t{v}, \t{w})\big|_{z=0}=0, \quad \lim_{z \rightarrow+\infty}(\t{u}, \t{v})=(U(t, x, y), V(t, x, y)),
\eali
\rt.
\ee
where $(U(t, x, y), V(t, x, y))$ and $p(t, x, y)$ are the tangential velocity fields and pressure of the Euler flow, satisfying
\be\label{3deuler}
\left\{
\bali
&\partial_{t} U+U \partial_{x} U+V \partial_{y} U+\partial_{x} p=0, \\
&\partial_{t} V+U \partial_{x} V+V \partial_{y} V+\partial_{y} p=0.
\eali
\right.
\ee
Here we write $\t{\boldsymbol{u}}=(\t{u},\t{v},\t{w})$ and $\boldsymbol{U}=(U(t, x, y), V(t, x, y))$.

The Prandtl equations was proposed by Prandtl \cite{Prandtl:1904MATHCONGRESS} in 1904 in order to explain the mismatch between the no slip boundary condition of the Navier-Stokes equations and the corresponding Euler equations when the vanishing viscosity limit $\nu\rightarrow 0$. Reader can see \cite{OS:1999AMMC} and references therein for more introductions on the boundary layer theory and check \cite{GN:2011CPAM} for some recent development on this topic.

Since the Prandtl equations \eqref{3dprandtl} has no tangential diffusion and the advection term will cause one order tangential derivative loss when we perform finite-order energy estimates.  Local in time well-posedness of the Prandtl equations in Sobolev spaces for general data without structure assumptions is still an open question.

For data in Sobolev spaces, under the monotonic assumption on the tangential velocity of the outflow, Oleinik and Samokhin \cite{OS:1999AMMC} proved the local existence and uniqueness by using Crocco transform for the two dimensional Prandtl equations. Recently, in \cite{AWXY:2015JAMS} (see also  \cite{MW:2015CPAM}), the second author of the present work and their collaborators introduce a nice change of variable such that the cancellation property of the bad term was discovered and the local well-posedness in Sobolev spaces was proved by direct weighted energy estimates.  Ill-posedness in Sobolev spaces for the Prandtl equations around non-monotonic outflow can be found in E and Engquist \cite{EE:1997CPAM}, Gerard-Varet and Dormy \cite{GVD:2010JAMS}, and Gerard-Varet and Nguyen \cite{GVN:2012ASYMA}. For the three dimensional Prandtl equations, Liu, Wang and Yang \cite{LWY:2017ADVANCES} proved the local wellposedness of solutions in Sobolev spaces under some constraints on the flow structure in addition to the monotonic assumption. While this flow structure is violated,  in \cite{LWY:2016ARMA}, they showed  the ill posedness of the 3D Prandtl equations in Sobolev spaces, which indicates that the monotonicity condition on tangential velocity fields is not sufficient for the well-posedness of the three-dimensional Prandtl
equations.

  As for the long time behavior of the Prandtl equations in Sobolev spaces, Oleinik and Samokhin \cite{OS:1999AMMC} shows global regular solutions exist when the tangential variable belongs to a finite interval with the amplitude being small. Xin and Zhang \cite{XZ:2004ADVANCES} proved the global existence of weak solutions under an additional favorable sign condition on the pressure $p$. The second author of the present paper and Zhang \cite{XZ:2017JDE} proved that the lifespan of the solution is $\mathcal{O}(\ln \f{1}{\e})$ if the initial data is a small $\e$ perturbation around the monotonic shear flow in Sobolev spaces.  All the above results are discussed in the two dimensional spaces.

For data in analytical spaces, Sammartino and Caflisch \cite{SC:1998CMP} established the local well-posedness in both tangential and normal variables by using the abstract Cauchy-Kowalewski theorem. The analyticity on the normal variable was removed in \cite{LCS:2003SIAM}. Later in \cite{KV:2013CMS}, Kukavica and Vicol gave an energy-based proof of the local well-posedness result with data analytical only with respect to the tangential variable. The above results are both valid for the two and three dimensional Prandtl equations. To relax the analyticity condition is not easy. In the case where the data has
 a single non-degenerate critical point in the normal variable at each fixed tangential variable point, G\'{e}rard-Varet and Masmoudi \cite{GVM:2015ASENS}
 proved the local well-posedness of the two dimensional Prandtl equations  in Gevrey class 7/4 with respect to the tangential variable, which was extended to Gevrey class 2 in \cite{LY:2020JEMS} for data that are small perturbations of a shear flow with a single non-degenerate critical point for the three dimensional Prandtl equations. Note that this exponent $2$ is optimal in view of the instability mechanism of \cite{GVD:2010JAMS}.  Recently, Dietert and G\'{e}rard-varet \cite{DGV:2019ANNPDE} improved the well-posedness to Gevrey class 2 by removing the hypothesis on the number and order of the critical points for the two dimensional Prandtl equations, which was extended to the three dimensional case in Li, Masmoudi and Yang in \cite{LMY:2021ARXIV}.

For the long time existence of the Prandtl equations with analytical data, the first result appeared in Zhang and Zhang \cite{ZZ:2016JFA} where authors there proved that the lifespan of the tangentially analytical solution is $\mathcal{O}(\e^{-4/3})$ if the data is an $\e$ size and the outflow is of size $\e^{5/3}$ for the two and three dimensional Prandtl equations. Later, an almost global existence result was proved in \cite{IV:2016ARMA} in two dimensional case, where a good unknown combining the tangential component of the velocity and its derivative on the normal variable is introduced to extend the existence time. This result was extended to the three case in \cite{LZ:2020AAM}. Most recently, Global existence of tangentially analytical solutions with small data was proved in \cite{PZ:2021ARMA} for the two dimensional Prandtl equations. This result was improved to the optimal Gevrey class 2 in \cite{WWZ:2021ARXIV}. As far as the authors know, there isn't any results concerning on the global existence of tangentially analytical solutions for the three Prandtl equations.

The main purpose of this paper is to study the global existence of tangentially analytical solutions for the three dimensional axially symmetric Prandtl equations. As far as the authors know, study on the axially symmetric flow has attracted more and more attention recently, such as pointwise blow-up criteria and Liouville type theorems for the axially symmetric Navier-Stokes equations in \cite{CSTY:2009CPDE, KNSS:2009ACTAMATH, Pan:2016JDE, CPZZ:2020ARMA} and references therein.  Most recently, Albritton, Bru\'{e} and Colombo obtained the non-uniqueness of Leray solutions of the forced axially symmetric Navier-Stokes equations in \cite{ABC:2021ARXIV}. The novelty of our present work lies in the followings: First, inspiring by the tangentially analytical energy functional in Ignatova-Vicol \cite{IV:2016ARMA}, we will construct a similar energy functional with the main difference being that  the analytical energy constructed in our results involves in a polynomial weight on the tangential variables, which results from the special structure of the axially symmetric Prandtl equations and mainly set to overcome the order mismatch between the tangentially radial velocity, $u^r$, and the normal velocity, $u^z$, with respect to the distance to the symmetric axis, $r$, when we use the divergence free condition to connect them each other.  Second, the unknown acted on by the energy functional is specially designed, which is a combination of the tangentially radial velocity, $u^r$, and its primitive one in the normal variable. This quantity has a sufficiently fast decay-in-time rate for our constructed weighted analytical energy, which ensures the positive lower bound of the analytical radius for any time. Its two dimensional originality can be traced to Paicu-Zhang \cite{PZ:2021ARMA}.

%{\red
%First, inspiring by the good unknown in Paicu-Zhang \cite{PZ:2021ARMA}, we will construct a similar quantity, which is a combination of the tangentially radial velocity, $u^r$, and its primitive one in the normal variable. This quantity has a sufficiently fast decay-in-time rate for our constructed weighted analytical energy, which ensures the positive lower bound of the analytical radius for any time. Second, the analytical energy constructed in our results involves in a polynomial weight on the tangential variables, which is resulted from the special structure of the axially symmetric Prandtl equations and mainly set to overcome the order mismatch between the tangentially radial velocity, $u^r$, and the normal velocity, $u^z$, with respect to the distance to the symmetric axis, $r$, when we use the divergence free condition to connect them each other.
%}

\section{Reformation of the problem and the main theorem}

\subsection{Reformation of the equations}
\q\ In the following, we give a derivation of the three dimensional axially symmetric Prandlt equations in cylindrical coordinates $(r,\,\th,\,z)$, i.e., for $\boldsymbol{x}=(x,\,y,\,z)\in\mathbb{R}^3$,
\bes
r=\sqrt{x^2+y^2},\q \th=\arctan\frac{y}{x},
\ees
a solution of \eqref{3dprandtl} and \eqref{3deuler} are said to be an axisymmetic solution, if and only if
\[
\bali
&\t{\boldsymbol{u}}=\t{u}^r(t,r,z)e_r+\t{u}^{\th}(t,r,z)e_{\th}+\t{u}^z(t,r,z)e_z,\\
&\boldsymbol{U}=U^r(t,r,z)e_r+U^\th (t,r,z)e_{\th},\\
&p=p(t,r),
\eali
\]
satisfy the system \eqref{3dprandtl} and \eqref{3deuler}, separately, where the components of $\t{\boldsymbol{u}}$ and $\t{\boldsymbol{U}}$ in cylindrical coordinates are independent of $\th$ and the basis vectors $e_r,e_\th,e_z$ are
\[
e_r=\left(\frac{x}{r},\frac{y}{r},0\right),\quad e_\th=\left(-\frac{y}{r},\frac{x}{r},0\right),\quad e_z=(0,0,1).
\]

Then in cylindrical coordinates, system \eqref{3dprandtl} and equations \eqref{3deuler} satisfy

\be\label{3dasprandtl}
\lt\{
\bali
&\partial_{t} \t{u}^r+\left(\t{u}^r \partial_{r}+\t{u}^z \partial_{z}\right) \t{u}^r-\f{(\t{u}^\th)^2}{r}+\partial_{r} p=\partial_{z}^{2} \t{u}^r, \\
&\partial_{t} \t{u}^\th+\left(\t{u}^r \partial_{r}+\t{u}^z \partial_{z}\right) \t{u}^\th+\f{\t{u}^\th\t{u}^r}{r}=\partial_{z}^{2} \t{u}^\th, \\
&\f{\partial_{r} (r\t{u}^r)}{r}+\partial_{z} \t{u}^z=0,\\
&(\t{u}^r, \t{u}^\th, \t{u}^z)\big|_{z=0}=0, \quad \lim_{z \rightarrow+\infty}(\t{u}^r, \t{u}^\th)=(U^r, U^\th),
\eali
\rt.
\ee
and

\bes
\left\{
\bali
&\partial_{t} U^r+U^r \partial_{r} U-\f{U^2_\th}{r}+\partial_{r} p=0, \\
&\partial_{t} U^\th+U^r \partial_{r} U^\th+\f{U^rU^\th}{r}=0.
\eali
\right.
\ees

Now we consider that the flow is swirl free, which means $u^\th= U^\th \equiv 0$. Also we consider the simple case of the outflow $U^r\equiv 0$, which indicates that $\p_r p\equiv 0$.   Then \eqref{3dasprandtl} is simplified to

\be\label{3dasprandtl2}
\lt\{
\bali
&\partial_{t} \t{u}^r+\left(\t{u}^r \partial_{r}+\t{u}^z \partial_{z}\right) \t{u}^r-\partial_{z}^{2} \t{u}^r=0, \\
&\f{\partial_{r} (r\t{u}^r)}{r}+\partial_{z} \t{u}^z=0,\\
&\left.(\t{u}^r, \t{u}^z)\right|_{z=0}=0, \quad \lim_{z \rightarrow+\infty} \t{u}^r=0.
\eali
\rt.
\ee

This simplified axially symmetric boundary layer equations \eqref{3dasprandtl2} has appeared in \cite[Chapter 4.1]{OS:1999AMMC}. If the axially symmetric velocity $\t{u}=\t{u}^r(t,r,z)e_r+\t{u}^{\th}(t,r,z)e_{\th}+\t{u}^z(t,r,z)e_z$ is smooth and divergence free, we can deduce that

\bes
\t{u}^r\big |_{r=0}=\t{u}^\th\big|_{r=0}\equiv 0.
\ees
See \cite{LW:2009SIAM}. Then  there isn't singularity for the quantity $\t{u}^r/r$ at $r=0$.

 Set  the new unknowns
\bes
(u^r, u^z):= (\f{\t{u}^r}{r}, \t{u}^z),
\ees
which satisfy the following new formation of axially symmetric Prandtl boundary layer equations
\be\label{3dasprandtl3}
\lt\{
\bali
&\partial_{t} {u}^r+\left(r{u}^r \partial_{r}+{u}^z \partial_{z}\right) {u}^r-\partial_{z}^{2} {u}^r+(u^r)^2=0, \\
&r\p_ru^r+2u^r+\partial_{z} {u}^z=0,\\
&({u}^r, {u}^z)\big|_{z=0}=0, \quad \lim_{z \rightarrow+\infty} {u}^r=0.
\eali
\rt.
\ee
%Introduce the shear flow
%\be
%u^s(t,z)=\f{2}{\s{\pi}}\int^{z/\s{4\langle t\rangle}}_0 e^{-\zeta^2}d\zeta,
%\ee
%which satisfies
%\be
%\lt\{
%\bali
%&\p_t  u^s-\p^2_z u^s=0,\\
%&u^s\big|_{z=0}=0,\q u^s\big|_{z=+\i}=1.
%\eali
%\rt.
%\ee
%
%We write the solution of \eqref{3dasprandtl2} as a perturbation of $(u^s,0)$ by
%\be
%\t{u}^r(t,r,z)=u^s(t,z)+u^r(t,r,z),\q \t{u}^z(t,r,z)=u^z(t,r,z).
%\ee

\subsection{The linearly good unknown}

We assume $u^r, u^z$ decay sufficiently fast at $z$ infinity and define
\be\label{defnphi}
\phi(t,r,z):=-\int^{+\i}_z u^r(t,r,\bar{z})d \bar{z},
\ee
which also decays sufficiently fast at $z$ infinity. By integrating $\eqref{3dasprandtl3}_1$ on $[z,+\i]$ with respect to $z$ variable, we have
\bes
\lt\{
\bali
&\partial_{t} \phi-\partial_{z}^{2} \phi-u^ru^z+\int^{\i}_z (u^r)^2d\bar{z}-2\int^{\i}_z \p_zu^r u^zd\bar{z}=0, \\
&\p_z\phi\big|_{z=0}=0, \quad \lim_{z \rightarrow+\infty} \phi=0,\\
&\phi\big|_{t=0}=\phi_0=\int^\i_z u^r(0,r,\bar{z})d\bar{z}.
\eali
\rt.
\ees
And $(u^r,u^z)$ is obtained from $\phi$ as
\bes
u^r=\p_z \phi, \q u^z=-r\p_r\phi-2\phi.
\ees
Inspired by the good unknown in \cite{PZ:2021ARMA}, we define
\be\label{defng}
g:=\p_z\phi+\f{ z}{2\langle t\rangle} \phi= u^r+\f{ z}{2\langle t\rangle} \phi,
\ee
which satisfies
\be\label{3dasprandtl5}
\lt\{
\bali
&\partial_{t} g+(ru^r\p_r +u^z\p_z)g-\partial_{z}^{2} g+\f{1}{\angt}g+(u^r)^2-\f{1}{2\angt}u^z\p_z(z\phi)+\f{z}{\angt}u^r\phi\\
 &\qq\qq +\f{z}{2\angt}\int^{\i}_z (u^r)^2d\bar{z}-\f{z}{\angt}\int^{\i}_z \p_zu^r u^zd\bar{z}=0, \\
&g\big|_{z=0}=0, \quad \lim_{z \rightarrow+\infty} g=0,\\
&g\big|_{t=0}=g_0=u^r(0,r,z)+\f{z}{2}\phi_0(r,z).
\eali
\rt.
\ee

The introduced $g$ can control the velocity $u^r$ and $u^z$ nicely with a lower order time weight which leads to the possibility of closing our energy functional defined below for any $t>0$. This good unknown $g$ can be viewed as a lift of that in \cite{IV:2016ARMA}, where the type of good unknown $\t{g}=\p_zu^r+\f{ z}{2\langle t\rangle} u^r$ are introduced to prove the almost global existence of tangentially analytical solutions.

\subsection{Energy functional spaces and the main result}

Set
\bes
\th(t,z):=\exp\lt(\f{ z^2}{8\angt}\rt).
\ees
For $\la \in \bR$, set
\bes
\th_{\la}(t,z)=\exp\lt(\f{ \la z^2}{8\angt}\rt).
\ees
Then for any $\la, \mu\in\bR$, $\th_{\la+\mu}=\th_\la \cdot\th_\mu$.

Denote
\bes
M_n=\f{(n+1)^4}{n!},\ \p^\al_{h}=\p^{\al_1}_{x}\p^{\al_2}_{y},\ \al=(\al_1,\al_2)\in\bN^2,
\ees
and
 \bes
 \angr= (r^2+1)^{1/2}=\s{x^2+y^2+1}, \q \angt=(t+1),\q (x, y)\in\mathbb{R}^2,\q t\ge 0.
 \ees
 For a positive time-dependent function $\tau:=\tau(t)$, we introduce the Sobolev weighted semi-norms
\be\label{element}
\bali
&X_n=X_n(g,\tau)=\sum_{|\al|=n}\|\th \angr^n \p^\al_h g\|_{L^2} \tau^n M_n,\ n\in \bN;\\
&D_n=D_n(g,\tau)=\sum_{|\al|=n}\|\th \angr^n \p^\al_h\p_z g\|_{L^2} \tau^n M_n=X_n(\p_zg,\tau),\ n\in \bN;\\
% &{\red Z_n=Z_n(g,\tau)=\|\zeta\th_\al (\angr\p_r)^n  g\|_{L^2} \tau^n M_n=X_n(\zeta g,\tau),}\\
&Y_n=Y_n(g,\tau)=\sum_{|\al|=n}\|\th \angr^n \p^\al_h g\|_{L^2} \tau^{n-1}n M_n,\ n\in (\bN/\{0\}).
\eali
\ee

We consider the following functional space that is real-analytic in $\boldsymbol{x}_h=(x,y)$ and lies in a weighted $L^{2}$ space with respect to $z$,
$$
\mathcal{X}_{\tau}=\left\{ \forall \al\in\bN^2, \angr^{|\al|}\p^\al_h g(t, r, z) \in L^{2}\left(\bR^3_+ ; \theta^2dxdydz\right):\|g\|_{\mathcal{X}_{\tau}}<\infty\right\}
$$
where
$$
\|g\|_{\mathcal{X}_{\tau}}=\sum_{n \geqq 0} X_{n}(g, \tau).
$$

\begin{remark}
In our definition of the element \eqref{element}$_1$, there is a weight $\angr^{n}$ for the tangential $n$th order derivative, which is set to match and control the term $r\p_r g$ appeared in the equation \eqref{3dasprandtl5}.
\end{remark}

We also define the semi-norm
$$
\|g\|_{\mathcal{Y}_{\tau}}=\sum_{n \geqq 1} Y_{n}(g, \tau),
$$
which encodes the one-derivative gain in the analytic estimates. Note that for $\beta>1$, we have
$$
\|g\|_{\mathcal{Y}_{\tau}} \leqq \tau^{-1}\|g\|_{\mathcal{X}_{\beta \tau}} \sup _{n \geqq 1}\left(n \beta^{-n}\right) \leqq C_{\beta} \tau^{-1}\|g\|_{\mathcal{X}_{\beta \tau}}.
$$
The gain of a $z$ derivative shall be encoded in the dissipative semi-norm
$$
\|g\|_{\mathcal{D}_{\tau}}=\sum_{n \geqq 0} D_{n}(g, \tau)=\left\|\partial_{z} g\right\|_{\mathcal{X}_{\tau}}.
$$

Having introduced the functional spaces in our paper and before presenting the main results, we give a definition of solutions to the reformulated Prandtl equation \eqref{3dasprandtl5}.
\begin{definition}[Classical in tangential variables and weak in normal variable]
 For a fixed time $t>0$, let $\mathcal{H}$ be the closure of the set of functions
\bes
\lt\{f(t,x,y,z)\in C^\i_c(\bR^2\times [0,+\i));\ f|_{z=0}=0 \rt\}
\ees
under the space norm
\bes
\|f(t)\|^2_{\mathcal{H}}:=\sum\limits_{|\al|\leq 3}\int_{\bR^3_+}|\p^\al_h f(t,x,y,z)|^2\exp\lt(\f{z^2}{4\angt} \rt)dxdydz.
\ees

For $T>0$, we say a function $g$ is a classical in $x,y$ and weak in $z$ solution of \eqref{3dasprandtl5} if
\bes
\|g(t)\|_{\mathcal{H}}\in L^\i([0,T))\ \text{ and }\ \|\p_z g(t)\|_{\mathcal{H}}\in L^2([0,T)),
\ees
and \eqref{3dasprandtl5} holds when tested by $C^\i_c([0,T)\times\bR^2\times [0,+\i))$.
\end{definition}

\begin{theorem}\label{thmain}
Let $g_0(r,z)$ be tangentially analytical with radius of analyticity being $\tau_0>0$.  Then, for any $0<\dl\leq \f{1}{4}$, there exists a $\e_0$, depending only on $\dl$ and $\tau_0$, such that for any $\e\leq \e_0$, if
  \bes
  \|g_0\|_{X_{\tau_0}}\leq \e,
  \ees
then \eqref{3dasprandtl5} has a globally in-time solution $g$, which is tangentially  analytical with the radius of analyticity $\tau(t)\geq \f{1}{2}\tau_0$ and for any $t>0$, it satisfies
\be\label{energyest}
\bali
&\langle t\rangle^{\f{5}{4}-\dl}\|g(t)\|_{\mathcal{X}_{\tau(t)}}+\f{\dl}{12}\int^t_0\lt(\langle s\rangle^{\f{1}{4}-\dl}\|g(s)\|_{\mathcal{X}_{\tau(s)}}+\langle s\rangle^{\f{3}{4}-\dl}\|g(s)\|_{\mathcal{D}_{\tau(s)}}\rt)ds\\
&+C_0\int^t_0\f{\langle s\rangle^{\f{5}{4}-\dl}}{\tau^{2}(s)}\lt(\|g(s)\|_{\mathcal{X}_{\tau(s)}}+\langle s\rangle^{1/4}\|g(s)\|_{\mathcal{D}_{\tau(s)}}\rt)\|g(s)\|_{\mathcal{Y}_{\tau(s)}}ds
\leq  \|g_0\|_{\mathcal{X}_{\tau_0}}\leq \e_0.
\eali
\ee
\end{theorem}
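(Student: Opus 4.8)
The plan is to establish the a priori bound \eqref{energyest} and then close the argument by a continuation/bootstrap. First I would construct, for data of size $\e$, a local-in-time solution of \eqref{3dasprandtl5} lying in the solution class described above and carrying the analytic regularity of $\mathcal{X}_\tau$, by a regularization scheme --- for instance a Galerkin truncation in the normal variable together with a cut-off of the high tangential analytic frequencies, or an added tangential viscosity $-\nu\Dl_h$ --- for which all the estimates below hold uniformly in the regularization parameter, and then pass to the limit. Next, on the maximal interval of existence I would run a bootstrap: assuming $\tau(t)\ge\f12\tau_0$ and that the quantity on the left of \eqref{energyest} is $\le2\e$ on $[0,T)$, I would improve these to $\tau(t)>\f12\tau_0$ strictly and to $\le\e$, which forces the maximal time to be $+\i$ and proves the theorem.

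\medskip

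\noindent\textbf{Coupling the analyticity radius to the solution.} I would let $\tau(t)$ solve
\be\label{p:tauode}
\dot\tau(t)=-C_0\,\tau^{-2}(t)\Big(\|g(t)\|_{\mathcal{X}_{\tau(t)}}+\angt^{1/4}\|g(t)\|_{\mathcal{D}_{\tau(t)}}\Big),\qq\tau(0)=\tau_0,
\ee
with $C_0$ a large absolute constant. Since $\dot\tau<0$, differentiating the factor $\tau^n$ inside each $X_n$ reproduces exactly the term $-\dot\tau\,\|g\|_{\mathcal{Y}_\tau}=C_0\tau^{-2}\big(\|g\|_{\mathcal{X}_\tau}+\angt^{1/4}\|g\|_{\mathcal{D}_\tau}\big)\|g\|_{\mathcal{Y}_\tau}$ that sits on the left of \eqref{energyest}; this is the reservoir that will absorb the one-tangential-derivative loss of the transport operator. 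Conversely, once \eqref{energyest} is available, its $\f\dl{12}$-term gives $\int_0^\i\big(\angt^{\f14-\dl}\|g\|_{\mathcal{X}_\tau}+\angt^{\f34-\dl}\|g\|_{\mathcal{D}_\tau}\big)\,dt\le\f{12}{\dl}\e$; as $0<\dl\le\f14$ forces $1\le\angt^{\f14-\dl}$ and $\angt^{\f14}\le\angt^{\f34-\dl}$, this yields $\int_0^\i\big(\|g\|_{\mathcal{X}_\tau}+\angt^{\f14}\|g\|_{\mathcal{D}_\tau}\big)\,dt\le\f{12}{\dl}\e$, hence $\int_0^\i|\dot\tau|\,dt\le C_0(\tau_0/2)^{-2}\,\f{12}{\dl}\,\e\le\f{\tau_0}{2}$ provided $\e\le\e_0(\dl,\tau_0)$. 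This is exactly the claimed lower bound $\tau(t)\ge\f12\tau_0$.

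\medskip

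\noindent\textbf{The weighted analytic energy estimate.} For $|\al|=n$, apply $\p_h^\al$ to \eqref{3dasprandtl5} and test the resulting equation with $\th^2\angr^{2n}\p_h^\al g$. Conjugating $\p_t-\p_z^2$ by the Gaussian weight $\th=\exp(z^2/8\angt)$ and integrating by parts in $z$ produces $\f{d}{dt}\f12\|\th\angr^n\p_h^\al g\|_{L^2}^2+\|\th\angr^n\p_h^\al\p_zg\|_{L^2}^2-\f{1}{4\angt}\|\th\angr^n\p_h^\al g\|_{L^2}^2$; adding the $+\f1\angt\|\th\angr^n\p_h^\al g\|_{L^2}^2$ from the explicit damping $\f1\angt g$ in \eqref{3dasprandtl5} leaves $+\f{3}{4\angt}\|\th\angr^n\p_h^\al g\|_{L^2}^2$, and applying to $f=\angr^n\p_h^\al g$ the weighted Hardy-Poincar\'e inequality
\be\label{p:hardy}
\|\th\,\p_zf\|_{L^2_z}^2\ \ge\ \f{1}{2\angt}\,\|\th f\|_{L^2_z}^2,\qquad f|_{z=0}=0,
\ee
extracts a further $+\f{1}{2\angt}\|\th\angr^n\p_h^\al g\|_{L^2}^2$ out of the dissipation, so the net coercive damping rate is $\f54\,\angt^{-1}$; here \eqref{p:hardy} follows from the identity $\|\th\p_zf\|_{L^2_z}^2=\|\p_z(\th f)\|_{L^2_z}^2+\f{1}{4\angt}\|\th f\|_{L^2_z}^2+\f{1}{16\angt^2}\|z\th f\|_{L^2_z}^2$, the classical Hardy inequality $\|\p_z(\th f)\|_{L^2_z}^2\ge\f14\|z^{-1}\th f\|_{L^2_z}^2$ (using $(\th f)|_{z=0}=0$), and the arithmetic-geometric mean inequality. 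A small $O(\dl)$ portion of the rate $\f54\,\angt^{-1}$ is reserved in order to keep the dissipative quantities $\angt^{\f34-\dl}\|g\|_{\mathcal{D}_\tau}$ on the left while leaving the decay exponent at $\f54-\dl$. The polynomial weight $\angr^{2n}$ commutes with $\p_t-\p_z^2$ and is used only in the transport term: after integrating by parts in $r$ and invoking the divergence-free identity $\p_r(ru^r)+u^r+\p_zu^z=0$, the derivative falling on $\angr^{2n}$ creates the factor $2n\,r^2\angr^{2n-2}\le2n\,\angr^{2n}$, so the extra power $\angr^n$ precisely cancels the $r$ in $ru^r\p_r$ while the resulting $n$ is charged to $\|g\|_{\mathcal{Y}_\tau}$. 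All source terms of \eqref{3dasprandtl5} --- $(u^r)^2$, $\f{1}{2\angt}u^z\p_z(z\phi)$, $\f z\angt u^r\phi$, and the two $z$-weighted vertical integrals --- are rewritten through $g$ by solving the first-order ODE $\p_z\phi+\f{z}{2\angt}\phi=g$, which yields $\phi(z)=-\th^{-2}\int_z^{+\i}\th^2 g\,d\bar z$ and $u^z=-r\p_r\phi-2\phi$, bounding $\phi,\ z\phi,\ u^z$ by $g$ in the same Gaussian weight (at the cost of $\angt$-powers from the $z$-integration) and one extra power of $\angr$; Hardy and Minkowski inequalities in $z$ then close these contributions. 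Finally, summing over $|\al|=n$, multiplying by $\tau^nM_n$, summing over $n$, and estimating the Leibniz expansion of the commutator $[\p_h^\al,\,ru^r\p_r+u^z\p_z]g$ via the combinatorial properties of $M_n=(n+1)^4/n!$ (which let one charge the lost tangential derivative to $\|g\|_{\mathcal{Y}_\tau}$ and the accompanying combinatorial factor to $\tau^{-1}$), one is led to a master differential inequality
\be\label{p:master}
\bali
&\f{d}{dt}\|g\|_{\mathcal{X}_\tau}+\f{\kappa}{\angt}\|g\|_{\mathcal{X}_\tau}+c\,\|g\|_{\mathcal{D}_\tau}-\dot\tau\,\|g\|_{\mathcal{Y}_\tau}\\
&\qq\ \ls\ \f{\|g\|_{\mathcal{X}_\tau}+\angt^{1/4}\|g\|_{\mathcal{D}_\tau}}{\tau^{2}}\,\|g\|_{\mathcal{Y}_\tau}+\big(\|g\|_{\mathcal{X}_\tau}+\|g\|_{\mathcal{D}_\tau}\big)\|g\|_{\mathcal{X}_\tau},
\eali
\ee
with $\kappa\ge\f54-\dl$ and $c>0$; on the right, the first power of $\tau^{-1}$ is the generic transport loss, and the second reflects that $u^z=-r\p_r\phi-2\phi$ is one tangential derivative rougher than $g$ --- the genuinely three dimensional, axially symmetric obstruction, which is exactly why the weight $\angr^n$ is built into \eqref{element}.

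\medskip

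\noindent\textbf{Closing, and the main obstacle.} Choosing $C_0$ large in \eqref{p:tauode} absorbs $\tau^{-2}\big(\|g\|_{\mathcal{X}_\tau}+\angt^{1/4}\|g\|_{\mathcal{D}_\tau}\big)\|g\|_{\mathcal{Y}_\tau}$ into $-\dot\tau\,\|g\|_{\mathcal{Y}_\tau}$, and the bootstrap smallness $\|g\|_{\mathcal{X}_\tau}\ll1$ absorbs the quadratic remainder $\big(\|g\|_{\mathcal{X}_\tau}+\|g\|_{\mathcal{D}_\tau}\big)\|g\|_{\mathcal{X}_\tau}$ into $\f{\kappa}{\angt}\|g\|_{\mathcal{X}_\tau}+c\|g\|_{\mathcal{D}_\tau}$. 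Multiplying what remains by $\angt^{\f54-\dl}$, the coefficient of $\angt^{\f14-\dl}\|g\|_{\mathcal{X}_\tau}$ becomes $(\f54-\dl)-\kappa\le0$ --- so this quantity stays on the left, and it is the $\f\dl{12}$-term of \eqref{energyest} --- and integrating from $0$ to $t$ with $g|_{t=0}=g_0$ gives \eqref{energyest}. This improves the bootstrap and, via the second paragraph, keeps $\tau(t)\ge\f12\tau_0$; global existence follows. I expect the principal obstacle to be the energy estimate of the third paragraph: one has to carry the three weights --- the Gaussian $\th$ in $z$, the polynomial $\angr^n$ in $r$, and the analytic $\tau^nM_n$ --- \emph{simultaneously} through the transport operator and through the $z$-weighted, $r$-nonlocal source terms, producing at one stroke both the sharp damping rate $\f54\,\angt^{-1}$ (so the precise constants in \eqref{p:hardy} and in the Gaussian conjugation are essential) and the precise structure $\tau^{-2}\big(\|g\|_{\mathcal{X}_\tau}+\angt^{1/4}\|g\|_{\mathcal{D}_\tau}\big)\|g\|_{\mathcal{Y}_\tau}$ on the right, so that the coupling \eqref{p:tauode} actually keeps the analyticity radius bounded below for every $t>0$.
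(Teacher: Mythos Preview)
Your strategy matches the paper's: the Gaussian-conjugated energy identity with damping $\tfrac{3}{4\angt}$, the weighted Poincar\'e inequality \eqref{p:hardy} pushing the rate to the critical $\tfrac54$, the coupled ODE \eqref{p:tauode} absorbing the $\mathcal{Y}_\tau$-loss, and a bootstrap to globalize.  Two repairs are needed, however.

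First, your representation $\phi(z)=-\th^{-2}\int_z^{+\infty}\th^2 g\,d\bar z$ is ill-defined: in the working class one only has $\th g\in L^2_z$, so $\th^2 g=e^{z^2/(4\angt)}g$ is not integrable at $+\infty$.  The paper instead uses the boundary datum $\phi|_{z=0}=0$ (itself a consequence of $\int_0^\infty u^r\,dz\equiv0$, obtained by integrating the divergence-free relation in $z$) to write
\[
\phi(z)=\th^{-2}(z)\int_0^z\th^2(\bar z)\,g(\bar z)\,d\bar z,
\]
and all the pointwise and $L^2$ bounds on $\phi,\,u^r,\,\p_zu^r,\,u^z$ in terms of $g$ rest on this formula together with the elementary estimate $e^{-\zeta^2}\int_0^\zeta e^{\bar\zeta^2}d\bar\zeta\ls(1+\zeta)^{-1}$.

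Second, after dividing the $L^2$ identity by $\|\th\angr^n\p_h^\al g\|_{L^2}$, the dissipative term on the left of \eqref{p:master} is $\tfrac{1}{\sqrt{2\angt}}\|g\|_{\mathcal{D}_\tau}$ rather than a fixed $c\|g\|_{\mathcal{D}_\tau}$ (this is what \eqref{p:hardy} actually gives for the ratio $D_n^2/X_n$), and the right-hand nonlinear remainder carries the same $\tau^{-2}$ and an extra $\angt^{1/4}$ on the $\mathcal{D}_\tau$ factor as the $\mathcal{Y}_\tau$-term does; your absorption argument still closes once these powers are inserted.  Finally, for the lower bound on $\tau$ the paper integrates $\tau^2\dot\tau$ to obtain $\tau^3(t)=\tau_0^3-6C_0\int_0^t(\|g\|_{\mathcal{X}_\tau}+\langle s\rangle^{1/4}\|g\|_{\mathcal{D}_\tau})\,ds$, which avoids the circularity of using $\tau\ge\tau_0/2$ inside the very estimate meant to prove it; your version is salvageable only if the bootstrap hypothesis is taken as $\tau\ge\tau_0/4$ and then improved to $\tau\ge\tau_0/2$.
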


\begin{remark}
It follows from the estimates in Lemma \eqref{gcontrol1} and Lemma \eqref{gcontrol2} below that bounds on $g,\ \p_zg$ in \eqref{energyest} in $\mathcal{X}_{\tau}$ imply similar estimates on $u^r$ and $u^z$. So global existence and uniqueness of tangentially analytical solutions in Theorem \ref{thmain} indicates global existence and uniqueness of tangentially analytical solutions for the original system \eqref{3dasprandtl3} and \eqref{3dasprandtl2}.  The proof of Theorem \ref{thmain} mainly consists of a priori estimates (cf. Section \ref{apriori}) and the local well posedness. Since the local existence and uniqueness of the tangentially analytical solutions has already shown in many references, e. g. \cite{KV:2013CMS, IV:2016ARMA, ZZ:2016JFA}, here we only present the a priori estimate \eqref{energyest}.
\end{remark}

\begin{remark}
The construction of the energy functional $\mathcal{X}_\tau$ is inspired by that in \cite{IV:2016ARMA}. The main difference is that there is a weight $\angr^{n}$ for the tangential $n$th order derivative due the appearance of the operator $r\p_r$ in the transport term of the equation \eqref{3dasprandtl5}$_1$.
\end{remark}

\begin{remark}
In the model \eqref{3dasprandtl2}, we only consider the case that the outflow $U^r\equiv 0$. Actually the proof can be also applied to the case that $U^r=r\e f(t)$, where $\e>0$ is sufficiently small and $f(t)$ decays sufficiently fast as $t\rightarrow +\i$. The computation will be more elaborated and complicated. For simplicity and convenience of presenting the main idea, we omit this extension and leave it to the interested reader.
\end{remark}
\qed
\begin{remark}
Here we only consider the the axially symmetric Prandtl equation, %without swirl ($u^\th\equiv U^\th\equiv 0$). For the full axially symmetric case, similar results will be consider in our future work.
 extensions of Theorem \ref{thmain} to the axially symmetric MHD boundary layer system and in the tangential Gevrey spaces will be considered in our future work.
\end{remark}

For a function $f(t,x,y,z)$ and $1\leq p,q \leq +\i$,  define
\bes
\|f(t)\|_{L^p_hL^q_z}:=\lt(\int^{+\i}_0 \lt(\int_{\bR^2} |f(t,x,y,z)|^p dxdy\rt)^{q/p}dz \rt)^{1/q}.
\ees

If $p=q$, we simply write it as $\|f\|_{L^p}$ and besides, if $p=q=2$, we will simply denote it as $\|f\|$. Throughout the paper, $C_{a,b,c,...}$ denotes a positive constant depending on $a,\,b,\, c,\,...$ which may be different from line to line. We also apply $A\lesssim_{a,b,c,\cdots} B$ to denote $A\leq C_{a,b,c,...}B$. For a two dimensional multi-index $\al=(\al_1,\al_2)\in\bN^2$, we write $\p^\al_{h}=\p^{\al_1}_{x}\p^{\al_2}_{y}$ and
$\p^k_h=\{\p^\al_h;\ |\al|=k\}$. For a norm $\|\cdot\|$, we use $\|(f,g,\cdots)\|$ to denote $\|f\|+\|g\|+\cdots$.

\section{A priori estimates and proof of the main theorem}\label{apriori}

\q\ First, we state a simple version of the local well-posedness result on the three dimensional Prandtl equations in tangentially analytical spaces. See \cite[Theorem 3.1 and Remark 3.3]{KV:2013CMS}.

\begin{theorem}[Theorem 3.1 of \cite{KV:2013CMS} with the outflow being zero in three dimensional spaces]\label{thlocal}
Fix the constant $\nu>1/2$ and denote $\langle z\rangle:=1+z$. For a function $f(t,x,y,z)$ and $\tau(t)>0$, define
\bes
\|f(t)\|^2_{\t{\mathcal{X}}_{\tau(t)}}:= \sum_{n\geq 0}\sum_{|\al|=n}\|\langle z\rangle^{\nu}\partial^\alpha_{x, y}f(t,x,y,z)\|^2_{L^2(\bR^3_+)}\tau^{2n}(t)M^2_n.
\ees

Then, for $\tau_0>0$, if the solution in \eqref{3dprandtl} with the outflow $\boldsymbol{U}$ being zero satisfies
 \bes
(\t{u},\t{v})|_{t=0}:=(\t{u}_0,\t{v}_0)\in \t{\mathcal{X}}_{\tau_0},
 \ees
then there exists a $T_\ast=T_\ast(\nu, \tau_0, \|(\t{u}_0,\t{v}_0)\|_{\t{\mathcal{X}}_{\tau_0}})>0$, such that the three dimensional Prandtl equations \eqref{3dprandtl} have a unique real-analytical solution in $[0,T_\ast)$ satisfying for any $t\in[0,T_\ast)$, $\tau(t)>0$ and
 \bes
 \|(\t{u},\t{v})(t)\|_{\t{\mathcal{X}}_{\tau(t)}}<+\i.
\ees
\end{theorem}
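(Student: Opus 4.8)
\textbf{Proof proposal for Theorem \ref{thlocal}.}

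The plan is to reduce the local existence statement to the standard abstract Cauchy--Kowalewski-type machinery already developed for tangentially analytic Prandtl equations, adapted to three tangential dimensions with zero outflow. First I would fix $\nu>1/2$ and, since the Euler outflow $\boldsymbol{U}$ vanishes, observe that the pressure term drops out of \eqref{3dprandtl}, so the $(\t u,\t v)$ system is a pure parabolic--transport system with the normal diffusion $\p_z^2$, subject to the Dirichlet condition at $z=0$ and decay at $z=\infty$. I would then set up the analytic energy $\|\cdot\|_{\t{\mathcal X}_{\tau}}$ with a time-dependent radius $\tau(t)$ that I allow to \emph{decrease} in time, $\dot\tau(t)=-\kappa<0$, mimicking the construction in \cite{KV:2013CMS}. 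Differentiating $\|(\t u,\t v)(t)\|_{\t{\mathcal X}_{\tau(t)}}^2$ in time produces three types of contributions: the good dissipative term coming from $\p_z^2$, which controls $\sum\|\langle z\rangle^\nu \p^\alpha_h\p_z(\t u,\t v)\|^2\tau^{2n}M_n^2$; the bad term $\dot\tau \cdot \tfrac{d}{d\tau}\|\cdot\|^2$, which is negative and provides a gain of one tangential derivative weighted by $\kappa$; and the nonlinear transport terms $(\t u\p_x+\t v\p_y+\t w\p_z)(\t u,\t v)$, where $\t w=-\int_0^z(\p_x\t u+\p_y\t v)\,d\bar z$ is recovered from the divergence-free condition.

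The key step is the commutator/nonlinear estimate: applying $\p^\alpha_h$ with $|\alpha|=n$ to the transport term and distributing the derivatives via the Leibniz rule, one must bound each piece by the product of two analytic norms at comparable radii, using the combinatorial properties of $M_n=(n+1)^4/n!$ (namely that $\sum_{k} \binom{n}{k}M_k M_{n-k}\lesssim M_n$ up to the standard loss, so that analytic norms form an algebra). The term with $\t w\p_z$ requires care because of the vertical anti-derivative; here the weight $\langle z\rangle^\nu$ and the condition $\nu>1/2$ are exactly what make $\|\langle z\rangle^\nu \t w\|_{L^\infty_z}\lesssim \|\langle z\rangle^{\nu}\p_z\t w\|^{1/2}\cdots$ type interpolation work, trading the vertical integration for a half-derivative that is absorbed by the dissipation. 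The net effect is that for each $n$, $\tfrac{d}{dt}$ of the $n$-th term is controlled by $-\kappa \cdot(\text{one-derivative-gain term}) + (\text{dissipation}) + C\|(\t u,\t v)\|_{\t{\mathcal X}_\tau}\cdot(\text{one-derivative-gain term})$; summing over $n$ and choosing $\kappa$ larger than $C$ times the initial size $\|(\t u_0,\t v_0)\|_{\t{\mathcal X}_{\tau_0}}$ makes the bad terms sign-definite, so the energy does not grow on a time interval $[0,T_\ast)$ with $T_\ast$ determined by $\tau(t)=\tau_0-\kappa t>0$, i.e.\ $T_\ast\sim \tau_0/\kappa$, which depends only on $\nu$, $\tau_0$ and $\|(\t u_0,\t v_0)\|_{\t{\mathcal X}_{\tau_0}}$ as claimed.

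The remaining steps are standard: one runs this a priori estimate on a Galerkin or mollified approximation scheme (truncating the number of tangential frequencies, or regularizing the vertical variable) to produce genuine solutions satisfying the bound uniformly, then passes to the limit using the analytic bound for compactness in the tangential variables and the parabolic smoothing in $z$; uniqueness follows from the same type of energy estimate applied to the difference of two solutions, now in a slightly smaller analytic radius, with no smallness needed because the difference equation is linear in the difference. I expect the main obstacle to be the nonlinear term involving $\t w\p_z(\t u,\t v)$: one must simultaneously handle the loss of one tangential derivative hidden in $\t w=-\int_0^z(\p_x\t u+\p_y\t v)d\bar z$ (cured by the analytic-radius dissipation $\kappa$) and the growth of the weight $\langle z\rangle^\nu$ under the vertical integration (cured by interpolation against the $\p_z$-dissipation, which is where $\nu>1/2$ enters). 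Everything else is a three-dimensional bookkeeping variant of the two-dimensional arguments in \cite{KV:2013CMS, IV:2016ARMA}, so I would cite those for the routine parts and only spell out the three-dimensional nonlinear estimate in detail.
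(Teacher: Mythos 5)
The paper gives no proof of this theorem: it is stated verbatim as a specialization of Theorem 3.1 and Remark 3.3 of Kukavica--Vicol \cite{KV:2013CMS} and simply cited. Your sketch is a faithful reconstruction of exactly that reference's method (shrinking analytic radius $\dot\tau<0$ absorbing the one-derivative loss, the $\langle z\rangle^\nu$ weight with $\nu>1/2$ controlling $\t w$ via the divergence-free condition, dissipation from $\p_z^2$, and standard approximation/uniqueness arguments), so it is essentially the same approach the paper relies on.
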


Based on the above local well-posedness result of the three dimensional Prandtl equations, The proof of Theorem \ref{thmain} is simplified to the following a prior estimate, stated as Proposition \ref{proapriori}, and continuity argument.

\begin{proposition}\label{proapriori}
For $T>0$, let $g$ be the tangentially  analytical solution of \eqref{3dasprandtl5} and  $g_0(r,z)$ be tangentially analytical with radius of analyticity being $\tau_0>0$.  Then, for any $0<\dl\leq \f{1}{4}$, there exists a $\e_0$, depending only on $\dl$ and $\tau_0$ such that for any $\e\leq \e_0$, if
  \bes
  \|g_0\|_{\mathcal{X}_{\tau_0}}\leq \e,
  \ees
then for any $0<t<T$, the solution $g$ satisfies
\bes
\bali
&\langle t\rangle^{\f{5}{4}-\dl}\|g(t)\|_{\mathcal{X}_{\tau(t)}}+\f{\dl}{12}\int^t_0\lt(\langle s\rangle^{\f{1}{4}-\dl}\|g(s)\|_{\mathcal{X}_{\tau(s)}}+\langle s\rangle^{\f{3}{4}-\dl}\|g(s)\|_{\mathcal{D}_{\tau(s)}}\rt)ds\\
&+C_0\int^t_0\f{\langle s\rangle^{\f{5}{4}-\dl}}{\tau^{2}(s)}\lt(\|g(s)\|_{\mathcal{X}_{\tau(s)}}+\langle s\rangle^{1/4}\|g(s)\|_{\mathcal{D}_{\tau(s)}}\rt)\|g(s)\|_{\mathcal{Y}_{\tau(s)}}ds
\leq  \|g_0\|_{X_{\tau_0}}\leq \e_0,
\eali
\ees
and the tangentially analytical radius $\tau(t)\geq \f{1}{2}\tau_0$.
\end{proposition}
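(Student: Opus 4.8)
The plan is to run a tangentially-analytic energy estimate directly on the good unknown $g$ governed by \eqref{3dasprandtl5}, weighted by the Gaussian $\th$ in $z$, by the polynomial $\angr^n$ in the horizontal variables, and by a time weight $\angt^{5/4-\dl}$, while simultaneously tracking the analytic radius $\tau(t)$ through the CKW-type mechanism $\dot\tau(t)=-K\,\tau(t)^{-?}$ (in the style of \cite{IV:2016ARMA, PZ:2021ARMA}). Concretely, for each multi-index $\al$ with $|\al|=n$ I would apply $\p^\al_h$ to \eqref{3dasprandtl5}$_1$, multiply by $\th^2\angr^{2n}\p^\al_h g$, integrate over $\bR^3_+$, and sum against the factors $\tau^n M_n$ defining $X_n$. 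The elliptic term $-\p_z^2 g$ produces the dissipation $D_n$ after integrating by parts (the boundary term vanishes since $g|_{z=0}=0$); the commutator of $\p_z^2$ with $\th$ together with the linear term $\frac1{\angt}g$ and the time derivative hitting $\th$ and $\angt^{5/4-\dl}$ is where the Gaussian weight is chosen to turn borderline terms into favorable ones, exactly as in the two-dimensional computation, producing the coercive lower-order terms $\frac{\dl}{12}\bigl(\angt^{1/4-\dl}\|g\|_{\mathcal X_\tau}+\angt^{3/4-\dl}\|g\|_{\mathcal D_\tau}\bigr)$ after summation. The time weight exponent $5/4-\dl$ is dictated by the decay rate that the special unknown $g$ enjoys (its $2$D analogue in \cite{PZ:2021ARMA}); I would verify at the linear level that this exponent is admissible, i.e. that $\frac{d}{dt}\angt^{5/4-\dl}$ is dominated by the gain from $\frac1{\angt}g$ plus a fraction of the dissipation.

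Next I would treat the nonlinear and lower-order terms in \eqref{3dasprandtl5}$_1$, namely the transport term $(ru^r\p_r+u^z\p_z)g$, the quadratic $(u^r)^2$, the terms $\frac1{2\angt}u^z\p_z(z\phi)$ and $\frac{z}{\angt}u^r\phi$, and the two nonlocal integral terms $\frac{z}{2\angt}\int_z^\i (u^r)^2\,d\bar z$ and $\frac{z}{\angt}\int_z^\i \p_z u^r\,u^z\,d\bar z$. Here I would use the conversions $u^r=\p_z\phi=g-\frac{z}{2\angt}\phi$, $u^z=-r\p_r\phi-2\phi$, together with Lemmas \ref{gcontrol1}–\ref{gcontrol2} (referenced in the excerpt) that bound $u^r,u^z$ and $\phi$ in $\mathcal X_\tau$-type norms by $\|g\|_{\mathcal X_\tau}$, $\|g\|_{\mathcal D_\tau}$ with appropriate $\angt$ powers. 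The analytic bookkeeping is the standard one: when $\p^\al_h$ with $|\al|=n$ hits a product, split the sum $\sum_{\beta\le\al}\binom{\al}{\beta}$ at $|\beta|\le n/2$ versus $>n/2$; on the "low–high" piece put $L^\i_h$ on the low factor via a Sobolev embedding in the two horizontal variables and $L^2$ on the high factor, and exploit the combinatorial inequality $M_k M_{n-k}\lesssim M_n$ (which holds because $M_n=(n+1)^4/n!$ is a Gevrey-$1$ sequence) to convert the convolution into the product of two $\mathcal X_\tau$-norms, with the derivative that must be "spent" producing the gain factor $\tau^{-1}$ and an $n$, i.e. a $\mathcal Y_\tau$-norm. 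This is the origin of the cubic-looking term $C_0\int_0^t \frac{\angt^{5/4-\dl}}{\tau^2}\bigl(\|g\|_{\mathcal X_\tau}+\angt^{1/4}\|g\|_{\mathcal D_\tau}\bigr)\|g\|_{\mathcal Y_\tau}\,ds$ on the left-hand side: it is collected from the transport term with a \emph{favorable} sign by choosing $\dot\tau=-C_0\tau^{-1}\angt^{?}(\cdots)$, so that after summing in $n$ the $\tau^{-1}$ loss is absorbed into $-\dot\tau\,\|g\|_{\mathcal Y_\tau}$ against $\|g\|_{\mathcal X_\tau}$; the remaining genuinely nonlinear contributions are quadratic in $\|g\|_{\mathcal X_\tau}+\|g\|_{\mathcal D_\tau}$ and small by the $\e_0$-smallness, hence absorbed into the coercive $\frac{\dl}{12}$-terms.

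The crucial new point, and where the $\angr^n$ weight is indispensable, is the transport term $ru^r\p_r g$: applying $\p^\al_h$ creates a term $r u^r\,\p_r\p^\al_h g$ that carries one \emph{extra} power of $r$ compared with everything else, and also the relation $u^z=-r\p_r\phi-2\phi$ carries an $r$; naively these would lose a power of $\angr$ at each differentiation and destroy the summation in $n$. The fix — the three-dimensional analogue of the content of the remark after \eqref{element} — is that the weight $\angr^n$ in $X_n$ exactly compensates: $\angr^n\cdot r\lesssim \angr^{n+1}$ and $r\p_r$ acting on $\angr^n$ gives back $n\angr^n$ up to lower order, so the extra $r$ is paid for by shifting one unit of the analytic weight, again at the cost of one derivative and hence a $\mathcal Y_\tau$ factor, consistently with the term already displayed on the left of \eqref{energyest}. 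I expect this $r$-weight bookkeeping in the transport and divergence-constraint terms — making sure the polynomial weights, the Gaussian weight, and the analytic combinatorics all line up simultaneously without generating an uncompensated power of $r$, of $z/\angt$, or of $n$ — to be the main technical obstacle; everything else is a careful but routine adaptation of \cite{IV:2016ARMA} and \cite{PZ:2021ARMA}. Once the differential inequality
\[
\frac{d}{dt}\Bigl(\angt^{5/4-\dl}\|g\|_{\mathcal X_{\tau(t)}}\Bigr)+\frac{\dl}{12}\Bigl(\angt^{1/4-\dl}\|g\|_{\mathcal X_\tau}+\angt^{3/4-\dl}\|g\|_{\mathcal D_\tau}\Bigr)+C_0\frac{\angt^{5/4-\dl}}{\tau^2}\bigl(\|g\|_{\mathcal X_\tau}+\angt^{1/4}\|g\|_{\mathcal D_\tau}\bigr)\|g\|_{\mathcal Y_\tau}\le 0
\]
is established for $\|g\|_{\mathcal X_\tau}\lesssim\e_0$, integrating in time gives the stated bound; and the coupled ODE for $\tau$, of the form $\dot\tau\ge -C_0\angt^{-?}\|g\|$-type quantity with $\|g\|$ decaying like $\angt^{-5/4+\dl}$ (integrable since $5/4-\dl>1$ for $\dl\le 1/4$), yields $\tau(t)\ge \tau_0-C\e_0\ge\frac12\tau_0$ for $\e_0$ small, closing the bootstrap and hence, with the local theory of Theorem \ref{thlocal} and a continuity argument, proving global existence.
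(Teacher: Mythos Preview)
Your proposal is correct and follows essentially the same approach as the paper: the weighted energy identity on $\angr^n\p^\al_h g$ gives $\frac{d}{dt}X_n+\frac{3}{4\angt}X_n$ plus dissipation, a Poincar\'e-type inequality $\frac{1}{2\angt}\|\th f\|^2\le\|\th\p_z f\|^2$ trades part of the dissipation for an extra $\frac{1}{2\angt}X_n$ (yielding the $5/4$), the nonlinear terms split into a $\tau^{-2}(\|g\|_{\mathcal X_\tau}+\angt^{1/4}\|g\|_{\mathcal D_\tau})\|g\|_{\mathcal Y_\tau}$ piece absorbed by the choice $\dot\tau=-2C_0\tau^{-2}(\|g\|_{\mathcal X_\tau}+\angt^{1/4}\|g\|_{\mathcal D_\tau})$ and a $\tau^{-2}(\cdots)\|g\|_{\mathcal X_\tau}$ piece absorbed by the bootstrap, and the integrated dissipation bound then gives $\tau^3(t)\ge\tau_0^3-C\e_0$. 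The only details you left imprecise are the power of $\tau$ in the radius ODE (it is $\tau^{-2}$, coming from the two-derivative horizontal Sobolev embedding in the product estimates) and that one divides the energy identity by $\|\th\angr^n\p^\al_h g\|_{L^2}$ before multiplying by $\tau^n M_n$ and summing, so as to work directly with the $\ell^1$-in-$n$ norm $\|g\|_{\mathcal X_\tau}=\sum_n X_n$.
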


Before proving Proposition \ref{proapriori}, we give two lemmas which concern on bounds of $u^r,\ u^z$, $\phi$ in terms of $g$.

\subsection{Bounding of $u^r,\ u^z$, $\phi$ in terms of $g$}

\begin{lemma}\label{gcontrol1}
Let $(u^r,u^z)$ be the solution of \eqref{3dasprandtl3}, $\phi$ and $g$ be the functions defined in \eqref{defnphi} and \eqref{defng}. For any $n\in\bN,\ |\al|=n$ and $0\leq \la<1$, we have

\be\label{ephi0}
\bali
&\lt|\th_{\la}\angr^n\p^\al_h \phi\rt|\ls_{\la} \th_{\la-1}{\angt}^{\f{1}{4}}\| \th\angr^n\p^\al_h g\|_{L^{2}_z},
\eali
\ee

%\be\label{ephi0}
%\bali
%&\lt|\th_{\la}(z)(\angr\p_r)^n \phi(z)\rt|\\
%&\ls_{\la,p} \th_{\la-1}{\angt}^{\f{1}{2p}}\|\th(\angr\p_r)^ng\|_{L^{p/(p-1)}_z}(1+\zeta)^{-1/p}.
%\eali
%\ee

\be\label{ethetaur3}
\bali
&\lt|\th_{\la}\angr^n\p^\al_h u^r\rt|\ls_{\la} \lt|\th_{\la}\angr^n\p^\al_h g\rt|+\f{z}{\angt^{3/4}} \th_{\la-1}\| \th\angr^n\p^\al_h g\|_{L^{2}_z},
\eali
\ee
and
%\be\label{ethetaur3}
%\bali
%&\lt|\th_{\la}(z)(\angr\p_r)^nu^r(z)\rt|\\
%&\ls_{\la,p} \lt|\th_{\la}(z)(\angr\p_r)^ng(z)\rt|+\f{z}{\angt} \th_{\la-1}{\angt}^{\f{1}{2p}}\|\th(\angr\p_r)^ng\|_{L^{p/(p-1)}_z}(1+\zeta)^{-1/p}.
%\eali
%\ee
%\be\label{ethetaur5}
%\bali
%\lt|\p_z\lt(\th_{\la}(z)(\angr\p_r)^nu^r(z)\rt)\rt|\ls_{\la,p}& \f{z}{\angt}\lt|\th_{\la}(z)(\angr\p_r)^ng(z)\rt|+ \lt|\th_{\la}(z)\p_z %(\angr\p_r)^ng(z)\rt|\\
%&+\lt(\f{1}{\angt}+\f{z^2}{\angt^2}\rt) \th_{\la-1}{\angt}^{\f{1}{2p}}\|\th(\angr\p_r)^ng\|_{L^{p/(p-1)}_z}(1+\zeta)^{-1/p}.
%\eali
%\ee

\be\label{ethetaur5}
\bali
\lt|\th_{\la}\angr^n\p^\al_h \p_z u^r\rt|\ls_{\la}& \f{z}{\angt}\lt|\th_{\la}\angr^n\p^\al_h g(z)\rt|+ \lt| \th_{\la}\angr^n\p^\al_h \p_z g\rt|\\
&+\lt(\f{1}{\angt}+\f{z^2}{\angt^2}\rt) \th_{\la-1}{\angt}^{\f{1}{4}}\| \th\angr^n\p^\al_h g\|_{L^{2}_z}.
\eali
\ee

\end{lemma}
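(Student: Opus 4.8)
The plan is to derive all three pointwise bounds from the single relation $u^r = \partial_z\phi$ together with the definition $g = \partial_z\phi + \frac{z}{2\angt}\phi$ and the boundary/decay conditions $\partial_z\phi|_{z=0}=0$, $\phi|_{z=+\infty}=0$. The key observation is that $g$ is, up to the weight $\th^{-1/2}$, a ``good unknown'' in which $\phi$ satisfies a first-order linear ODE in $z$. Rewriting $g = \partial_z\phi + \frac{z}{2\angt}\phi$ and using the integrating factor $\th(t,z)=\exp(z^2/(8\angt))$, note that $\partial_z(\th\,\phi) = \th\,(\partial_z\phi + \frac{z}{4\angt}\phi)$, which is not quite $\th\,g$; instead the correct integrating factor has exponent $z^2/(4\angt)$, i.e. $\th^2$. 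So I would write $\partial_z(\th^2\phi) = \th^2(\partial_z\phi + \frac{z}{2\angt}\phi) = \th^2 g$, and since $\phi(t,r,z)\to 0$ as $z\to\infty$,
\[
\phi(t,r,z) = -\th^{-2}(t,z)\int_z^{+\infty}\th^2(t,\bar z)\,g(t,r,\bar z)\,d\bar z.
\]

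First I would prove \eqref{ephi0}. Apply $\angr^n\p^\al_h$ to the integral formula above (which commutes with everything since the weights depend only on $t,z$), multiply by $\th_\la$, and estimate
\[
\bigl|\th_\la\angr^n\p^\al_h\phi\bigr| \le \th_\la\,\th^{-2}\int_z^{+\infty}\th^2\,\bigl|\angr^n\p^\al_h g\bigr|\,d\bar z
= \int_z^{+\infty}\th_{\la-2}(t,z)\,\th^2(t,\bar z)\,\bigl|\angr^n\p^\al_h g\bigr|\,d\bar z.
\]
Now insert $\th^2(t,\bar z) = \th(t,\bar z)\cdot\th(t,\bar z)$ and apply Cauchy--Schwarz in $\bar z$, pulling one factor $\th$ together with $\angr^n\p^\al_h g$ into the $L^2_z$ norm; the remaining factor is $\int_z^{+\infty}\th^2(t,\bar z)\,d\bar z$ times the $z$-dependent prefactor $\th_{\la-2}(t,z)^2$, and one must check this is bounded by a constant times $\th_{\la-1}^2\,\angt^{1/2}$. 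This is the Gaussian tail computation: $\int_z^\infty e^{\bar z^2/(4\angt)}d\bar z$ does not converge, so in fact one needs $\la<1$ to have $\th_{\la-2}\cdot(\text{tail})$ make sense --- the correct split is to write $\th_{\la-2}(z)^2\int_z^\infty\th(\bar z)^2 d\bar z = \int_z^\infty \exp\bigl(\frac{(\la-2)z^2}{4\angt}+\frac{\bar z^2}{4\angt}\bigr)d\bar z$ and bound the exponent using $\bar z^2 - (2-\la)z^2 \le -(1-\la)\bar z^2$ on... no: rather substitute $\bar z = z + s$ and use $\bar z^2 = z^2 + 2zs + s^2$, giving exponent $\frac{(\la-1)z^2 + 2zs + s^2}{4\angt}$; completing the square in $s$ and using $\la-1<0$ yields a convergent Gaussian in $s$ of width $\sim\angt^{1/2}$ times $\exp(\frac{(\la-1)z^2}{4\angt}\cdot c)$, producing the claimed $\th_{\la-1}\angt^{1/4}$ after taking the square root. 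I expect this Gaussian tail estimate --- tracking exactly how the weight exponent degrades from $\la-2$ (after dividing by $\th^2$) back to $\la-1$ while picking up the factor $\angt^{1/4}$, and seeing why $\la<1$ is needed --- to be the main technical obstacle; the rest is bookkeeping.

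Next, \eqref{ethetaur3} is immediate: $u^r = \partial_z\phi = g - \frac{z}{2\angt}\phi$, so $\th_\la\angr^n\p^\al_h u^r = \th_\la\angr^n\p^\al_h g - \frac{z}{2\angt}\th_\la\angr^n\p^\al_h\phi$, and applying \eqref{ephi0} to the second term (with the same $\la$) gives the stated bound, with $\frac{z}{2\angt}\cdot\angt^{1/4}=\frac{z}{2\angt^{3/4}}$. Finally, for \eqref{ethetaur5} I would differentiate $u^r = g - \frac{z}{2\angt}\phi$ in $z$: $\partial_z u^r = \partial_z g - \frac{1}{2\angt}\phi - \frac{z}{2\angt}\partial_z\phi = \partial_z g - \frac{1}{2\angt}\phi - \frac{z}{2\angt}u^r$. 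Apply $\angr^n\p^\al_h$ and multiply by $\th_\la$; the term $\th_\la\angr^n\p^\al_h\partial_z g$ is kept as is, $\frac{1}{2\angt}\th_\la\angr^n\p^\al_h\phi$ is controlled by \eqref{ephi0} giving the $\frac{1}{\angt}\th_{\la-1}\angt^{1/4}\|\cdot\|_{L^2_z}$ contribution, and $\frac{z}{2\angt}\th_\la\angr^n\p^\al_h u^r$ is controlled by \eqref{ethetaur3} giving $\frac{z}{2\angt}|\th_\la\angr^n\p^\al_h g| + \frac{z^2}{2\angt}\cdot\frac{1}{\angt^{3/4}}\th_{\la-1}\|\cdot\|_{L^2_z} = \frac{z}{2\angt}|\th_\la\angr^n\p^\al_h g| + \frac{z^2}{2\angt^2}\th_{\la-1}\angt^{1/4}\|\cdot\|_{L^2_z}$. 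Collecting the three contributions reproduces \eqref{ethetaur5}. Throughout, one must keep $0\le\la<1$ so that \eqref{ephi0} applies at each invocation; I would state this hypothesis once at the start and reuse it.
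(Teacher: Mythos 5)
There is a genuine gap, and it sits exactly where you predicted the main difficulty would be: the Gaussian tail estimate for \eqref{ephi0} (the other two bounds inherit the problem since you deduce them from it). Your representation
\bes
\phi(t,r,z)=-\,\th^{-2}(t,z)\int_z^{+\i}\th^2(t,\bar z)\,g(t,r,\bar z)\,d\bar z
\ees
integrates the ODE from the wrong end. After Cauchy--Schwarz the factor you must control is
\bes
\th_{\la-2}(z)^2\int_z^{+\i}\th^2(t,\bar z)\,d\bar z
=\int_z^{+\i}\exp\Big(\f{(\la-2)z^2+\bar z^2}{4\angt}\Big)\,d\bar z ,
\ees
and with $\bar z=z+s$ the exponent is $\f{(\la-1)z^2+2zs+s^2}{4\angt}$: the coefficient of $s^2$ is $+\f{1}{4\angt}>0$, so this is a divergent Gaussian, not a convergent one. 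Completing the square cannot change the sign of the leading term, and the prefactor $\th_{\la-2}(z)$, being a function of $z$ alone, cannot rescue a divergent $\bar z$-integral. Indeed, given only $\th g\in L^2_z$, the quantity $\int_z^{+\i}\th^2|g|\,d\bar z$ need not even be finite (the formula itself is only meaningful under strong a priori decay, and in any case no bound by $\|\th g\|_{L^2_z}$ can be extracted from it).

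The missing ingredient is a boundary condition at $z=0$ rather than at $z=+\i$. One has $\phi(t,r,0)=-\int_0^{+\i}u^r\,dz$, and integrating the divergence-free relation \eqref{3dasprandtl3}$_2$ in $z$ gives $r\p_rF+2F=u^z|_{z=0}=0$ for $F:=\int_0^{+\i}u^r\,dz$, i.e. $\p_r(r^2F)=0$, hence $F\equiv0$ and $\phi|_{z=0}=0$. Solving the ODE forward from $z=0$ yields
\bes
\phi(t,r,z)=\th^{-2}(t,z)\int_0^{z}\th^2(t,\bar z)\,g(t,r,\bar z)\,d\bar z ,
\ees
and now the Cauchy--Schwarz factor is $\big(\int_0^z\exp(\f{\bar z^2-z^2}{4\angt})d\bar z\big)^{1/2}\ls\angt^{1/4}$, since the exponent is nonpositive on $0\le\bar z\le z$; this is precisely the paper's route and produces the weight $\th_{\la-1}\angt^{1/4}$. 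Your deductions of \eqref{ethetaur3} and \eqref{ethetaur5} from \eqref{ephi0}, via $u^r=g-\f{z}{2\angt}\phi$ and $\p_zu^r=\p_zg-\f{1}{2\angt}\phi-\f{z}{2\angt}u^r$, are correct and coincide with the paper's computation, so the lemma is recovered once \eqref{ephi0} is repaired as above.
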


\pf
 We only show the proof of that $n=0$ since the case $n>0$ follows the same line.   From $\eqref{3dasprandtl3}_2$, we have
 \bes
 r\p_r \int^\i_0 u_rdz +2\int^\i_0 u_rdz=-\int^\i_0 \p_z u_zdz=u_z(t,r,0)=0,
 \ees
which indicates that
\bes
r \int^\i_0 u^rdz=0.
\ees
Since when $r>0$, the above equality implies that $\int^\i_0 u^r dz=0$ for $r>0$, then continuity of $u^r$ indicates that
\bes
 \int^\i_0 u^rdz\equiv 0.
\ees
By the definition of $\phi$ and $g$ in \eqref{defnphi} and \eqref{defng}, we have
\be\label{phig}
\lt\{
\bali
&\p_z\phi+\f{ z}{2\langle t\rangle} \phi=g,\\
&\phi\big|_{z=0}=0.
\eali
\rt.
\ee
Solving the ODE, we get
\be\label{phif}
\phi(t,r,z)=\exp\lt(-\f{z^2}{4\angt}\rt)\int^z_0 g(t,r,\bar{z})\exp\lt(\f{\bar{z}^2}{4\angt}\rt) d\bar{z}.
\ee
For any $0\leq\la<1$, by multiplying the above equality with $\th_{\la}$, we have
\be\label{ethetaphi}
\bali
\th_{\la}\phi= \th_{\la-1}(z) \int^z_0 \th(\bar{z})g(\bar{z})\exp\lt(\f{1}{8\angt}(\bar{z}^2-z^2)\rt) d\bar{z}.
\eali
\ee
Differentiating \eqref{phif} on $z$ gives that
\be\label{urformula}
u^r(t,r,z)=\p_z\phi=-\f{z}{2\angt}\exp\lt(-\f{z^2}{4\angt}\rt)\int^z_0 g(t,r,\bar{z})\exp\lt(\f{\bar{z}^2}{4\angt}\rt) d\bar{z}+g.
\ee
Multiplying \eqref{urformula} by $\th_\la$ gives that
\be\label{ethetaur2}
\bali
\th_{\la}u^r=&\th_{\la}g-\f{z}{2\angt} \th_{\la-1}(z) \int^z_0 \th(\bar{z})g(\bar{z})\exp\lt(\f{1}{8\angt}(\bar{z}^2-z^2)\rt) d\bar{z}.
\eali
\ee
Differentiating \eqref{urformula} on $z$ and multiplying the resulted equation by $\th_\la$ give that
\be\label{zurest}
\bali
\th_{\la}\p_zu^r=&\th_{\la}\p_z g-\f{z}{2\angt} \th_{\la}g\\
 &-\lt(\f{1}{2\angt}-\f{ z^2}{4\angt^2} \rt)\th_{\la-1} \int^z_0 \th(\bar{z})g(\bar{z})\exp\lt(\f{1}{8\angt}(\bar{z}^2-z^2)\rt) d\bar{z}.
\eali
\ee
Using the fact that for any $\beta\geq 0$,
\bes
\sup_{\zeta\geqq 0} \zeta^\beta e^{-\zeta^2}\leq C_\beta,
\ees
we have
\bes
\lt| \lt(\f{z}{\s{\angt}}\rt)^\beta \th_{\la-1}\rt| \leq C_{\la,\beta}.
\ees
Moreover, by considering $0\leq\zeta\leq 1$ and $\zeta>1$, it is not hard to check that
\bes
e^{-\zeta^2}\int^\zeta_0 e^{\bar{\zeta}^2}d\bar{\zeta}\leq \f{2}{1+\zeta}.
\ees
Then a change of variable indicates that
\be\label{useineq}
 \int^z_0 \exp\lt(\f{1}{4\angt}(\bar{z}^2-z^2)\rt) d\bar{z}\leq \f{C}{1+\zeta} \s{\angt}.
\ee
In \eqref{ethetaphi}, by using H\"{o}lder inequality on $z$, we have

\be\label{ethetaphi1}
\bali
\lt|\th_{\la}\phi\rt|\leqq& \th_{\la-1}(z)\|\th g\|_{L^{2}_z} \lt( \int^z_0 \exp\lt(\f{1}{4\angt}(\bar{z}^2-z^2)\rt) d\bar{z} \rt)^{1/2}\\
           \ls & \th_{\la-1}\|\th g\|_{L^{2}_z}\angt^{1/4} (1+\zeta)^{-1/4}\\
           \ls & \th_{\la-1}\|\th g\|_{L^{2}_z}\angt^{1/4},
\eali
\ee
which is \eqref{ephi0} for $n=0$.\\
In \eqref{ethetaur2}, by using H\"{o}lder inequality and \eqref{useineq}, we have

\bes
\bali
\lt|\th_{\la}u^r\rt|\leqq& \lt|\th_{\la}g\rt|+\f{z}{\angt} \th_{\la-1}\|\th g\|_{L^2_z} \lt( \int^z_0 \exp\lt(\f{1}{4\angt}(\bar{z}^2-z^2)\rt) d\bar{z} \rt)^{1/2}\\
               \ls& \lt|\th_{\la}g\rt|+\f{z}{\angt^{3/4}} \th_{\la-1}\|\th g\|_{L^2_z},
\eali
\ees
which is \eqref{ethetaur3} for $n=0$.

%\be\label{ethetaur4}
%\bali
%&\lt|\p_z\lt(\th_{\la}(z)u^r(z)\rt)\rt|\\
%\leqq& \f{z}{\angt}\lt|\th_{\la}(z)g(z)\rt|+ \lt|\th_{\la}(z)\p_z g(z)\rt|\\
%&+\lt(\f{1}{\angt}+\f{z^2}{\angt^2}\rt) \th_{\la-1}\|\th g\|_{L^{p/(p-1)}_z} \lt( \int^z_0 \exp\lt(\f{p}{8\angt}(\bar{z}^2-z^2)\rt) d\bar{z} \rt)^{1/p}.
%\eali
%\ee
%
%
%Inserting this into \eqref{ethetaur1} and \eqref{ethetaur4}, we can have
%\be\label{eurweig}
%\lt|\th_{\la}(z)u^r(z)\rt|\ls_{\la,p} \lt|\th_{\la}(z)g(z)\rt|+\f{z}{\angt} \th_{\la-1}{\angt}^{\f{1}{2p}}\|\th g\|_{L^{p/(p-1)}_z}(1+\zeta)^{-1/p}.
%\ee
%
%\be
%\bali
%\lt|\p_z\lt(\th_{\la}(z)u^r(z)\rt)\rt|\ls_{\la,p}& \f{z}{\angt}\lt|\th_{\la}(z)g(z)\rt|+ \lt|\th_{\la}(z)\p_z g(z)\rt|\\
%&+\lt(\f{1}{\angt}+\f{z^2}{\angt^2}\rt) \th_{\la-1}{\angt}^{\f{1}{2p}}\|\th g\|_{L^{p/(p-1)}_z}(1+\zeta)^{-1/p}.
%\eali
%\ee
In \eqref{zurest}, by using H\"{o}lder inequality and \eqref{useineq}, we have
\bes
\bali
\lt|\th_{\la}\p_zu^r\rt|\ls_{\la}& \f{z}{\angt}\lt|\th_{\la}g\rt|+ \lt|\th_{\la}\p_z g\rt|\\
&+\lt(\f{1}{\angt}+\f{z^2}{\angt^2}\rt) \th_{\la-1}{\angt}^{\f{1}{4}}\|\th g\|_{L^{2}_z}(1+\zeta)^{-1/2}\\
\ls_\la & \f{z}{\angt}\lt|\th_{\la}g\rt|+ \lt|\th_{\la}\p_z g\rt|+\lt(\f{1}{\angt}+\f{z^2}{\angt^2}\rt) \th_{\la-1}{\angt}^{\f{1}{4}}\|\th g\|_{L^{2}_z},
\eali
\ees
which is \eqref{ethetaur5} for $n=0$.

%From \eqref{estii8}, we have
%
%\be\label{estii9}
%\bali
%\lt\|\th_{\la\al}(z)(\angr\p_r)^n \p_zu^r(z)\rt\|_{L^2_z} \ls& \f{1}{\s{\angt}}\lt\|\th_{\al}(z)(\angr\p_r)^ng(z)\rt\|_{L^2_z}+ \lt\|\th_{\al}(z)\p_z (\angr\p_r)^ng(z)\rt\|_{L^2_z}
%\eali
%\ee

By applying $\angr^n \p^\al_h$ to \eqref{ethetaphi}, \eqref{ethetaur2} and \eqref{zurest}, the above derivation from \eqref{ethetaphi1} also stand by replacing $\phi,\ u^r,\ \p_z u^r$ and $g$ by $\angr^n \p^\al_h \phi$, $\angr^n \p^\al_h u^r$, $\angr^n \p^\al_h \p_z u^r$ and  $\angr^n \p^\al_h g$, respectively.

Based on the rough estimates in Lemma \ref{gcontrol1}, we have the following much more subtle integration controls of $u^r$, $u^z$ and $\phi$ in terms of the weighted $L^2$ norm of $g$.

\begin{lemma}[Bounding of $u^r,\ u^z$, $\phi$ in terms of $g$]\label{gcontrol2}
For any $n\in\bN,\ |\al|=n$ and $0\leq \la<1$, we have the following estimates
%{\red
%\be\label{lphiesti1}
%\bali
%\lt\|\th_{\la\al}(\angr\p_r)^n\phi \rt\|_{L^2_hL^\i_v}\ls&
%\angt^{1/4}\lt\|\th_{\al}(\angr\p_r)^n g\rt\|_{L^2_hL^2_v}.
%\eali
%\ee
%
%\be\label{lphiesti2}
%\bali
%\lt\|\th_{\la\al}(\angr\p_r)^n\phi \rt\|_{L^\i}\ls&
%\angt^{1/4}\lt\|\th_{\al}(z)(\angr\p_r)^n g\rt\|^{1/2}_{L^2}\lt\|\th_{\al}(\angr\p_r)^{n+1} g\rt\|^{1/2}_{L^2}.
%\eali
%\ee
%}

\be\label{lphiesti1}
\bali
\lt\|\th_{\la}\angr^n \p^\al_h\phi \rt\|_{L^2_z}\ls_\la& \angt^{1/2}\lt\|\th\angr^n\p^\al_h g\rt\|_{L^2_z},
\eali
\ee
\be\label{luresti0}
\|\th_{\la}\angr^n \p^\al_h u^r\|_{L^2}\ls_\la \|\th\angr^n \p^\al_h g\|_{L^2},
\ee
\be\label{luresti-1}
\sum_{|\al|=n}\|\th_{\la}\angr^n \p^\al_h u^r\|_{L^\i_hL^2_z}\ls_\la (n+1)^2\sum^{n+2}_{|\al|=n}\|\th\angr^{|\al|} \p^\al_h g\|_{L^2},
\ee
\be\label{luresti1}
\bali
\lt\|\th_{\la}\angr^n \p^\al_h u^r\rt\|_{L^2_hL^\i_z}\ls_\la& \lt\|\th\angr^n \p^\al_h(g,\p_zg)\rt\|_{L^2},
\eali
\ee
\be\label{luresti2}
\bali
\sum_{|\al|=n}\lt\|\th_{\la}\angr^n \p^\al_h u^r\rt\|_{L^\i_hL^\i_z}\ls_\la (n+1)^2 \sum^{n+2}_{|\al|=n}\lt\|\th\angr^{|\al|} \p^{\al}_h(g,\p_z g)\rt\|_{L^2},
\eali
\ee
\be\label{luzesti1}
\bali
\lt\|\th_{\la}\angr^n \p^\al_h u^z\rt\|_{L^2_hL^\i_z}\ls_\la& \angt^{1/4} \lt\| \th\angr^n \p^\al_h (r\p_r g,g)\rt\|_{L^2},
\eali
\ee
\be\label{luzesti2}
\bali
\sum_{|\al|=n}\lt\|\th_{\la}\angr^n \p^\al_h u^z\rt\|_{{L^\i_hL^\i_z}}\ls_\la& (n+1)^2\angt^{1/4}\sum^{n+2}_{|\al|=n} \lt\|\th\angr^{|\al|} \p^{\al}_h (r\p_r g, g)\rt\|_{L^2},
\eali
\ee
%{\red
%\be\label{ll1esti}
%\|\th_\al(\angr\p_r)^n g\|_{L^1} \leq C\angt^{1/4}\|\th_\al(\angr\p_r)^n g\|^{1/2}_{L^2} \|\zeta\th_\al(\angr\p_r)^n g\|^{1/2}_{L^2}
%\ee
%}
%\be\label{luzesti1}
%\bali
%&\|\th_{\la\al}(\angr\p_r)^n u^z\|_{L^2L^\i_z}\leq \angt^{1/2}\|\th_{\al}(\angr\p_r)^n(r\p_r+1)(g,\p_zg)\|_{L^{2}}.
%\eali
%\ee
%\be\label{luzesti2}
%\bali
%&\|\th_{\la\al}(\angr\p_r)^n u^z\|_{L^\i}\\
%\leq& \angt^{1/2}\|\th_{\al}(\angr\p_r)^n(r\p_r+1)(g,\p_zg)\|^{1/2}_{L^{2}}\|\th_{\al}(\angr\p_r)^{n+1}(r\p_r+1)(g,\p_zg)\|^{1/2}_{L^{2}}.
%\eali
%\ee
\be\label{lpuzesti1}
\bali
\lt\|\th_{\la}\angr^n\p^\al_h \p_z u^r\rt\|_{L^2}\ls_{\la}  \angt^{-1/2}\lt\|\th\angr^n\p^\al_h g\rt\|_{L^2}+\lt\| \th \angr^n\p^\al_h \p_z g\rt\|_{L^2},
\eali
\ee
\be\label{lpuzesti2}
\bali
\sum_{|\al|=n}\lt\|\th_{\la}\angr^n \p^\al_h \p_zu^r \rt\|_{L^\i_hL^2_z}\ls_\la&(n+1)^2\sum^{n+2}_{|\al|=n} \lt(\angt^{-1/2}\lt\|\th\angr^{|\al|}\p^{\al}_h g\rt\|_{L^2}+\lt\| \th \angr^{|\al|}\p^{\al}_h \p_z g\rt\|_{L^2}\rt).
\eali
\ee

\end{lemma}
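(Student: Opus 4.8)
The plan is to prove all the inequalities of Lemma~\ref{gcontrol2} by starting from the pointwise representations \eqref{ethetaphi}, \eqref{ethetaur2}, \eqref{zurest} derived in Lemma~\ref{gcontrol1}, together with the kernel estimate \eqref{useineq}, and then passing from pointwise bounds to $L^2_z$, $L^\infty_z$, $L^2_h$ and $L^\infty_h$ bounds by Minkowski's/Young's convolution inequality in $z$ and the two-dimensional Sobolev embedding $H^2(\bR^2)\hookrightarrow L^\infty(\bR^2)$ in the horizontal variables. As in Lemma~\ref{gcontrol1}, I will first treat $n=0$ and then note that $\angr^n\p^\al_h$ applied to the representation formulas gives exactly the same structure with $g$ replaced by $\angr^n\p^\al_h g$, so the $n\geq 1$ case follows by the same computation; the Sobolev-embedding estimates pick up the factor $(n+1)^2$ and the sum $\sum_{|\al|=n}^{n+2}$ because one controls $\|f\|_{L^\infty_h}\ls \sum_{|\beta|\leq 2}\|\p^\beta_h f\|_{L^2_h}$ and the weight $\angr$ must be redistributed when derivatives hit $g$ versus the weight.

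The core analytic input is the convolution lemma: from \eqref{phif} one has $\phi(t,r,z)=\int_0^z K(t,z,\bar z)g(t,r,\bar z)\,d\bar z$ with $K(t,z,\bar z)=\exp\big(\tfrac{1}{4\angt}(\bar z^2-z^2)\big)$, and \eqref{useineq} says $\int_0^z K\,d\bar z\ls \angt^{1/2}$. A symmetric estimate (integrating in $z$ for fixed $\bar z$, i.e.\ $\int_{\bar z}^\infty K(t,z,\bar z)\,dz\ls \angt^{1/2}$, which follows from the same Gaussian computation) gives the other half of Young's inequality. Hence the operator $g\mapsto \int_0^z K g\,d\bar z$ maps $L^2_z\to L^2_z$ with norm $\ls\angt^{1/2}$, which immediately yields \eqref{lphiesti1}; the weighted versions with $\th_\la$ are handled exactly as in \eqref{ethetaphi}, writing $\th_\la\phi = \th_{\la-1}(z)\int_0^z \th(\bar z)g(\bar z)\exp\big(\tfrac{1}{8\angt}(\bar z^2-z^2)\big)d\bar z$ and using $|(z/\s{\angt})^\beta\th_{\la-1}|\leq C_{\la,\beta}$ to absorb any polynomial-in-$z$ prefactor (the $z$, $z^2$ factors appearing in \eqref{ethetaur2}, \eqref{zurest}) into the Gaussian, at the price of an extra power of $\angt$.

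With this in hand each item is routine bookkeeping: \eqref{luresti0} and \eqref{luresti1} come from \eqref{ethetaur2} — the first term $\th_\la g$ is trivial, the second is the convolution operator followed by $\|\cdot\|_{L^2_z}$ (for \eqref{luresti0}) or by $\|\cdot\|_{L^\infty_z}$, where one uses that the $z$-derivative of $u^r$ re-expressed via \eqref{zurest} is in $L^2_z$, i.e.\ $\|f\|_{L^\infty_z}^2\ls \|f\|_{L^2_z}\|\p_z f\|_{L^2_z}$, producing the $(g,\p_z g)$ on the right. The $L^\infty_h$ versions \eqref{luresti-1}, \eqref{luresti2}, \eqref{luzesti2}, \eqref{lpuzesti2} follow from the corresponding $L^2_h$ version by 2D Sobolev embedding, which is where the $(n+1)^2$ and the extended sum up to $|\al|=n+2$ enter. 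For $u^z=-r\p_r\phi-2\phi$ one differentiates the representation \eqref{ethetaphi} of $\phi$ in $r$ (the derivative can fall on $g$, on the weight $\exp(\cdots)$, or on $\th_{\la-1}(z)$ — all of which are controlled by the same kernel estimate up to harmless $\angt$-powers and $z$-prefactors absorbed into $\th_{\la-1}$), giving \eqref{luzesti1}–\eqref{luzesti2} with the $r\p_r g$ term on the right and the $\angt^{1/4}$ loss coming from the $\angt^{1/2}$ of the operator combined with the $L^\infty_z$ square-root trick. Finally \eqref{lpuzesti1}–\eqref{lpuzesti2} come directly from \eqref{zurest}: the first two terms are $\p_z g$ and $\tfrac{z}{\angt}g$ (and $|z/\angt|\cdot|\th_\la|$ is bounded by $\angt^{-1/2}$ after moving a $z/\s{\angt}$ into $\th_{\la-1}$, or just kept as is since $z\,\th_\la\ls \s{\angt}\,\th_{\la'}$ for $\la<\la'<1$), and the last term is the kernel operator with an extra $\angt^{-1}$ from the $1/(2\angt)-z^2/(4\angt^2)$ prefactor, the $z^2$ again absorbed into $\th_{\la-1}$; balancing $\angt^{-1}\cdot\angt^{1/2}\cdot\angt^{1/4}=\angt^{-1/4}$ is still $\ls \angt^{-1/2}$ only after also using $(1+\zeta)^{-1/2}$, which is exactly the refinement noted at the end of Lemma~\ref{gcontrol1}'s proof, so I would keep track of that decaying factor. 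The main obstacle is not any single estimate but the careful redistribution of the weights $\angr^n$ and the Gaussian weights $\th_\la$ — in particular making sure that when $\p^\al_h$ or $r\p_r$ lands on $\angr^n$ rather than on $g$ one still closes with the claimed weight $\angr^n$ (using $|r|\ls\angr$ and $|\p_r\angr^n|\ls n\angr^{n-1}$, hence $\angr^{-1}\cdot n\angr^{n}\ls n\angr^{n-1}\cdot\angr$, absorbed), and that the $z$-polynomial prefactors are always swallowed by the strictly sub-unit Gaussian $\th_{\la-1}$ with $\la<1$; this is where the constants $C_\la$ blow up as $\la\uparrow 1$, which is why the hypothesis $0\leq\la<1$ is essential.

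\qed
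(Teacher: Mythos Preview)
Your proposal is correct and follows essentially the same route as the paper: start from the pointwise representations \eqref{ethetaphi}--\eqref{zurest} of Lemma~\ref{gcontrol1}, pass to $L^2_z$ or $L^\infty_z$, then use the 2D Sobolev embedding (with the weight-redistribution estimate $|\p^2_h(\angr^n f)|\ls (n+1)^2\angr^{-2}\sum_{|\gamma|\le 2}|\angr^{n+|\gamma|}\p^{\gamma}_h f|$) for the $L^\infty_h$ versions. Two small points where the paper streamlines your plan: (i) You do not need Schur/Young at all. The pointwise bounds \eqref{ephi0}, \eqref{ethetaur3}, \eqref{ethetaur5} already carry the factor $\|\th g\|_{L^2_z}$ (constant in $z$), so for \eqref{lphiesti1}, \eqref{luresti0}, \eqref{lpuzesti1} one simply takes the $L^2_z$ norm of the remaining Gaussian prefactor, e.g.\ $\|\th_{\la-1}\|_{L^2_z}\ls_\la\angt^{1/4}$ and $\|(\angt^{-1}+z^2\angt^{-2})\th_{\la-1}\|_{L^2_z}\ls_\la\angt^{-3/4}$; this gives the $\angt^{-1/2}$ in \eqref{lpuzesti1} directly, without the $(1+\zeta)^{-1/2}$ refinement you were planning to track. (ii) For $u^z$, the $r$-derivative never hits the kernel (which is $r$-independent), so there is only one case, and the paper uses the equivalent formula $u^z(z)=\int_z^\infty(r\p_r u^r+2u^r)\,d\bar z$ together with $\th_\la(z)\le\th_\la(\bar z)$ for $\bar z\ge z$ and H\"older with $\th_{\la-1}$ to extract the $\angt^{1/4}$, then \eqref{ethetaur3} applied to $(r\p_r+2)g$.
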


\begin{proof}

From \eqref{ephi0}, we have

\bes
\bali
\lt\|\th_{\la}\angr^n\p^\al_h \phi\rt\|_{L^2_z}\ls_{\la}& \|\th_{\la-1}\|_{L^2_z}{\angt}^{\f{1}{4}}\| \th\angr^n\p^\al_h g\|_{L^{2}_z}\\
 \ls_\la& {\angt}^{\f{1}{2}}\| \th\angr^n\p^\al_h g\|_{L^{2}_z},
\eali
\ees
where we have used the fact that when $\la-1<0$,
\bes
 \|\th_{\la-1}\|_{L^2_z}\ls_\la \angt^{1/4}.
\ees
Hence, we have obtained \eqref{lphiesti1}.

From \eqref{ethetaur3},  we have

\be\label{url2}
\bali
\lt\|\th_{\la}\angr^n\p^\al_h u^r\rt\|_{L^2}\ls_{\la}& \lt\|\th_{\la}\angr^n\p^\al_h g\rt\|_{L^2}+\|\f{z}{\angt^{3/4}} \th_{\la-1}\|_{L^2_z}\| \th\angr^n\p^\al_h g\|_{L^2}\\
 \ls_{\la}& \lt\|\th \angr^n\p^\al_h g\rt\|_{L^2},
\eali
\ee
which is \eqref{luresti0}.
%\\
%When $p=\i$,
%\be\label{eurx1}
%\bali
%\lt\|\th_{\la}\angr^n\p^n_h u^r\rt\|_{L^\i_hL^2_z}\ls_{\la}& \lt\|\th \angr^n\p^n_h g\rt\|_{L^\i_hL^2_z}.
%\eali
%\ee

Using the two dimensional Sobolev inequality

\bes
\|f\|_{L^\i_{h}}\ls \|f\|_{L^2_{h}} +\|\p^2_hf\|_{L^2_{h}},
\ees

we have
\be\label{eestiur}
\bali
\lt\|\th_{\la}\angr^n \p^\al_h u^r\rt\|_{L^\i_hL^2_z}\ls& \lt\|\th_{\la}\angr^n \p^\al_h u^r\rt\|_{L^2_hL^2_z}+\lt\|\th_{\la}\p^2_h\lt[\angr^n \p^\al_h u^r \rt]\rt\|_{L^2_hL^2_z}.
\eali
\ee

It is easy to show that for $n\in \bN/\{0\}$,
\be\label{sobolevx}
\lt|\p^2_h\lt[\angr^n \p^\al_h u^r \rt]\rt|\ls \f{(n+1)^2} {\angr^2} \sum^2_{|\g|=0}|\angr^{n+|\g|}\p^{\al+\g}_h u^r|.
\ee
Inserting \eqref{sobolevx} into \eqref{eestiur} and sum over $|\al|=n$, we have
\be\label{esobur}
\bali
\sum_{|\al|=n}\lt\|\th_{\la}\angr^n \p^\al_h u^r \rt\|_{L^\i_hL^2_z}\ls& (n+1)^2\sum^{n+2}_{|\al|=n}\lt\|\th_{\la}\angr^{|\al|} \p^{\al}_h u^r \rt\|_{L^2}.
\eali
\ee
Inserting \eqref{url2} into \eqref{esobur}, we obtain \eqref{luresti-1}.

%In \eqref{ethetaur3} and \eqref{ethetaur5}, choosing $p=2$ and using \eqref{eineq1}, we have
%\be\label{euresti1}
%\bali
%\lt|\th_{\la\al}(z)(\angr\p_r)^nu^r(z)\rt|\ls & \lt|\th_{\la\al}(z)(\angr\p_r)^ng(z)\rt|+{\angt}^{-1/4}\|\th_{\al}(\angr\p_r)^ng\|_{L^{2}_z}.
%\eali
%\ee
Also from \eqref{ethetaur3},  we have
\be\label{euresti2}
\bali
\lt\|\th_{\la}\angr^n \p^\al_h u^r\rt\|_{L^\i_z}\ls &  \lt\|\th_{\la}\angr^n\p^\al_h g\rt\|_{L^\i_z}+\|\f{z}{\angt^{3/4}} \th_{\la-1}\|_{L^\i_z}\| \th\angr^n\p^\al_h g\|_{L^{2}}\\
                                \ls_{\la} &\lt\|\th_{\la}\angr^n\p^\al_h g\rt\|_{L^\i_z}+\angt^{-1/4} \|\th\angr^n\p^\al_h g\|_{L^{2}}.
\eali
\ee
Using one dimensional Sobolev embedding
\bes
\bali
\lt\|\th_{\la}\angr^n \p^\al_h g\rt\|_{L^\i_z}\ls& \lt\|\th_{\la}\angr^n \p^\al_h g\rt\|^{1/2}_{L^2_z}\lt\|\p_z\lt(\th_{\la}\angr^n \p^\al_h g \rt)\rt\|^{1/2}_{L^2_z}\\
 \ls&\lt\|\th_{\la}\angr^n \p^\al_hg\rt\|^{1/2}_{L^2_z}\lt[\lt\|\th_{\la}\angr^n \p^\al_h\p_z g\rt\|_{L^2_z}+\lt\|\f{z}{\angt}\th_{\la}\angr^n \p^\al_hg\rt\|_{L^2_z}\rt]^{1/2}\\
 \ls_{\la}& \lt\|\th\angr^n \p^\al_h g\rt\|_{L^2_z}+\lt\|\th\angr^n \p^\al_h \p_z g\rt\|_{L^2_z}.
\eali
\ees
Inserting the above inequality into \eqref{euresti2}, we can have
\be\label{euresti3}
\bali
\lt\|\th_{\la}\angr^n \p^\al_h u^r\rt\|_{L^\i_z}\ls_{\la} & \lt\|\th\angr^n \p^\al_h (g,\p_z g) \rt\|_{L^2_z}.
\eali
\ee

%{\blue
%and
%\be
%\bali
%\lt|\p_z(\th_{\la\al}(z)(\angr\p_r)^nu^r(z))\rt|\ls & \angt^{-1/2}\lt|\th_{\al}(z)(\angr\p_r)^ng(z)\rt|+ \lt|\th_{\la\al}(z) (\angr\p_r)^n\p_z g(z)\rt|\\
%&+{\angt}^{-3/4}\|\th_{\al}(\angr\p_r)^ng\|_{L^{2}_z}
%\eali
%\ee
%}
The bound \eqref{luresti1} follows by taking $L^2$ norms in $x,y$ variables of the above inequality \eqref{euresti3}.

The same as \eqref{esobur}, we can have
\be\label{esoburx}
\sum_{|\al|=n}\lt\|\th_{\la}\angr^n \p^\al_h u^r \rt\|_{L^\i_hL^\i_z}\ls (n+1)^2\sum^{n+2}_{|\al|=n}\lt\|\th_{\la}\angr^{|\al|} \p^{\al}_h u^r \rt\|_{L^2_hL^\i_z}.
\ee
Integrating \eqref{euresti3} on the tangential variables and inserting the resulted inequality into \eqref{esoburx}, we can get \eqref{luresti2}.
%
%\be
%\bali
%&\lt\|\th_{\la\al}(z)(\angr\p_r)^nu^r(z)\rt\|_{L^\i_hL^\i_v}\\
%\ls& \lt\|\th_{\al}(\angr\p_r)^n(g,\p_zg)\rt\|^{1/2}_{L^2_hL^2_v}\lt(\sum^{2}_{i=1}\lt\|\th_{\al}(z)(\angr\p_r)^{n+i}(g,\p_z g)\rt\|_{L^2_hL^2_v}\rt)^{1/2}
%\eali
%\ee
%
%which is .

From the incompressible condition \eqref{3dasprandtl3}$_2$, we have
\bes
u^z(z)=-\int^\i_z \p_z u^z(\bar{z}) d\bar{z}=\int^\i_z (r\p_r u^r+2u^r)(\bar{z}) d\bar{z},
\ees
then we can get
\be\label{euzesti1}
\bali
\|\th_{\la}\angr^n \p^\al_h u^z\|_{L^\i_z}\leq& \|\th_{\la}\angr^n \p^\al_h(r\p_ru^r+2u^r)\|_{L^1_z}\\
\ls_{\la}& \|\th\angr^n \p^\al_h(r\p_ru^r+2u^r)\|_{L^2_z}\|\th_{\la-1}\|_{L^2_z}\\
\ls_{\la}& \angt^{1/4}\|\th\angr^n \p^\al_h(r\p_ru^r+2u^r)\|_{L^2_z}.
\eali
\ee
From \eqref{ethetaur3}, we have
\bes
\bali
&\|\th_{\la}\angr^n \p^\al_h(r\p_ru^r+2u^r)\|_{L^2_z}\\
\ls_{\la}  &\lt\|\th_{\la}\angr^n \p^\al_h(r\p_r+2)g\rt\|_{L^2_z}+\lt\|\f{z}{\angt^{3/4}}\th_{\la-1}\rt\|_{L^2_z}\|\th\angr^n \p^\al_h(r\p_r+2)g\|_{L^{2}_z}\\
\ls_{\la}  &\|\th\angr^n \p^\al_h (r\p_r+2)g\|_{L^{2}_z}.
\eali
\ees

The same as \eqref{esobur}, we can have
\be\label{esobuz}
\sum_{|\al|=n}\lt\|\th_{\la}\angr^n \p^\al_h u^z \rt\|_{L^\i_hL^\i_z}\ls_{\la} (n+1)^2\sum^{n+2}_{|\al|=n}\lt\|\th_{\la}\angr^{|\al|} \p^{\al}_h u^z \rt\|_{L^2_hL^\i_z}.
\ee
Inserting \eqref{luzesti1} into \eqref{esobuz}, we can get \eqref{luzesti2}.

Inserting the above inequality into \eqref{euzesti1} and then integrating the resulted equation in the tangential variables implies that
\bes
\bali
&\|\th_{\la}\angr^n\p^\al_h u^z\|_{L^2_hL^\i_z}\ls_{\la} \angt^{1/4}\|\th\angr^n\p^\al_h(r\p_r+2)g\|_{L^{2}},
\eali
\ees
which corresponds to \eqref{luzesti1}.

Then the same as \eqref{esobur} and using the estimate \eqref{luzesti1}, we can get \eqref{luzesti2}.

From \eqref{ethetaur5}, we can get
\bes
\bali
\lt\|\th_{\la}\angr^n\p^\al_h \p_z u^r\rt\|_{L^2}\ls_{\la}& \lt\| \f{z}{\angt}\th_{\la-1}\rt\|_{L^\i_z} \lt\|\th\angr^n\p^\al_h g\rt\|_{L^2}+ \lt\| \th \angr^n\p^\al_h \p_z g\rt\|_{L^2}\\
&+\lt\|\lt(\f{1}{\angt}+\f{z^2}{\angt^2}\rt) \th_{\la-1}\rt\|_{L^2_z}{\angt}^{\f{1}{4}} \| \th\angr^n\p^\al_h g\|_{L^{2}}\\
 \ls_\la&  \angt^{-1/2}\lt\|\th\angr^n\p^\al_h g\rt\|_{L^2}+\lt\| \th \angr^n\p^\al_h \p_z g\rt\|_{L^2}.
\eali
\ees
which is \eqref{lpuzesti1}.

Then almost the same as \eqref{esobur}, we can get
\bes
\bali
\sum_{|\al|=n}\lt\|\th_{\la}\angr^n \p^n_h \p_z u^r \rt\|_{L^\i_hL^2_z}\ls_{\la}& (n+1)^2\sum^{n+2}_{|\al|=n}\lt\|\th_{\la}\angr^{|\al|} \p^{\al}_h \p_zu^r \rt\|_{L^2_hL^2_z}\\
 \ls_{\la}&(n+1)^2\sum^{n+2}_{|\al|=n} \lt(\angt^{-1/2}\lt\|\th\angr^{|\al|}\p^{\al}_h g\rt\|_{L^2}+\lt\| \th \angr^{|\al|}\p^{\al}_h \p_z g\rt\|_{L^2}\rt).
\eali
\ees
which is \eqref{lpuzesti2}.
\end{proof}

\subsection{Weighted energy estimates for the good unknown $g$}

Now we perform the weighted energy estimates for the good unknown $g$.  Rewrite \eqref{3dasprandtl5}$_1$ as
\be\label{rg}
\lt\{
\bali
\partial_{t} g-\partial_{z}^{2} g+\f{1}{\angt}g=&-(ru^r\p_r +u^z\p_z)g-(u^r)^2+\f{1}{2\angt}u^z\p_z(z\phi)-\f{z}{\angt}u^r\phi\\
 &-\f{z}{2\angt}\int^{\i}_z (u^r)^2d\bar{z}+\f{z}{\angt}\int^{\i}_z \p_zu^r u^zd\bar{z}.
\eali
\rt.
\ee
Let $n\geq 0$ and $|\al|=n$. Applying $\angr^n\p^\al_h$ to $\eqref{rg}$ and multiplying the resulted equation with $\th^2\angr^n\p^\al_h g$, and then integrating over $\bR^3_+$ to give

\bes
\bali
&\f{1}{2}\f{\rm{d}}{\rm{d}t} \|\th\angr^n\p^\al_h g\|^2_{L^2}+ \|\th\angr^n\p^\al_h \p_z g\|^2_{L^2}+\f{3}{4\angt}\|\th \angr^n\p^\al_h g\|^2_{L^2}\\
=&- \int \th \angr^{n}\p^{\al}_h (u^r r\p_rg) \th\angr^n\p^\al_h  g- \int \th\angr^n\p^\al_h (u^z\p_z g) \th\angr^n\p^\al_h g\\
 &- \int \th \angr^{n}\p^{\al}_h (u^r)^2 \th\angr^n\p^\al_h g+\f{1}{2\angt} \int \th \angr^n\p^\al_h \lt(u^z\p_z(z\phi)\rt) \th\angr^n\p^\al_h g\\
 &-\f{1}{\angt}\int z\th  \angr^n\p^\al_h  (u^r\phi) \th\angr^n\p^\al_h g-\f{1}{2\angt} \int z\th\int^\i_z\angr^n\p^\al_h (u^r)^2d\bar{z} \th\angr^n\p^\al_h g\\
 &+\f{1}{\angt} \int z\th \int^\i_z \angr^n\p^\al_h (\p_z u^r u^z)d\bar{z} \th \angr^n\p^\al_h g\\
:=&\sum^7_{j=1}I^\al_{j}.
\eali
\ees
Here for a function $f(t,x,y,z)$, we have denoted $\int_{\bR^3_+} f(t,x,y,z)dxdydz$ simply by $\int f$ if no confusion is caused.

Dividing the above equality by $\|\th\angr^n\p^\al_h g\|_{L^2}$ and multiplying the resulted equation by $\tau^n(t) M_n$, then by summing for $|\al|=n$, we can get, for $n\geq 0$,
\be\label{weight2}
\bali
&\f{\rm{d}}{\rm{d}t} X_n+ \sum\limits_{|\al|=n}\f{ \|\th\angr^n\p^\al_h \p_z g\|^2_{L^2}}{\|\th \angr^n\p^\al_h g\|_{L^2}}+\f{3}{4\angt}X_n=\dot{\tau}(t)Y_n+\sum_{|\al|=n}\f{\tau^n(t) M_n}{\|\th \angr^n\p^\al_h g\|_{L^2}}\sum^7_{j=1}I^\al_{j},
\eali
\ee
where when $n=0$, we set $Y_0=0$.

Here we present a lemma to characterize the quantitative relation between $\|\th\angr^n\p^\al_h g\|^2_{L^2}$ and $\|\th \angr^n\p^\al_h \p_zg\|^2_{L^2}$.
\begin{lemma}\label{lpoincare}
Let $g$ be a smooth enough function in $x,y$ variables and belong to $H^1$ in $z$ variable, which decays to zero sufficiently fast as $z\rightarrow +\i$. Then we have
\be\label{weightpoincare}
\f{1}{2\angt}\|\th \angr^n\p^\al_h g\|^2_{L^2}\leq \|\th \angr^n\p^\al_h \p_zg\|^2_{L^2}.
\ee
\end{lemma}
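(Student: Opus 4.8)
The plan is to prove the weighted Poincaré-type inequality by a direct integration-by-parts computation, exploiting the Gaussian structure of the weight $\th = \exp(z^2/(8\angt))$. First I would fix $t$ and $n$ and $|\al|=n$, abbreviate $f := \angr^n\p^\al_h g$ (which inherits from $g$ the decay as $z\to+\i$ and the property $f|_{z=0}=0$ is not needed here, but vanishing at infinity is), and compute $\int_{\bR^3_+} \p_z(\th^2 f^2)\,dxdydz$. Since $\th^2 f^2 \to 0$ as $z\to+\i$ and the integrand is evaluated from $z=0$, the fundamental theorem of calculus in $z$ gives $\int_{\bR^2}\int_0^\infty \p_z(\th^2 f^2)\,dz\,dxdy = -\int_{\bR^2}(\th^2 f^2)\big|_{z=0}\,dxdy \le 0$. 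Expanding the derivative, $\p_z(\th^2 f^2) = 2\th^2 f \p_z f + 2\th\,\p_z\th\, f^2$, and noting $\p_z\th = \frac{z}{4\angt}\th$, this yields
\be\label{ibp-poincare}
\int_{\bR^3_+} \f{z}{2\angt}\,\th^2 f^2\,dxdydz \;\le\; -2\int_{\bR^3_+}\th^2 f\,\p_z f\,dxdydz.
\ee

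The second step is to handle the right-hand side of \eqref{ibp-poincare}. The naive bound $-2\int \th^2 f \p_z f \le 2\|\th f\|_{L^2}\|\th\p_z f\|_{L^2}$ loses the favorable sign, so instead I would keep the $z$-weight: by Cauchy–Schwarz with the splitting $\th^2 f\p_z f = \big(\sqrt{\tfrac{z}{2\angt}}\,\th f\big)\cdot\big(\sqrt{\tfrac{2\angt}{z}}\,\th\p_z f\big)$ one would get a term $\int \tfrac{2\angt}{z}\th^2(\p_z f)^2$ which is singular at $z=0$; that is not quite what is wanted either. The cleanest route is: write $\p_z f = \p_z f$ directly and use Young's inequality $-2\th^2 f\p_z f \le \eta\, \tfrac{z^2}{?}\cdots$ — more precisely, from \eqref{ibp-poincare}, apply Cauchy–Schwarz in the plain form but on a rescaled pairing, $-2\int\th^2 f\p_z f \le \tfrac12\int \tfrac{z}{2\angt}\th^2 f^2 + 2\int \tfrac{2\angt}{z}\cdot\tfrac{z}{2}\cdots$. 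Actually the correct and simplest choice is to absorb: from \eqref{ibp-poincare},
$$
\int \f{z}{2\angt}\th^2 f^2 \le 2\|\th f\|_{L^2}\,\|\th \p_z f\|_{L^2},
$$
which is not directly \eqref{weightpoincare}. So I would instead run the whole argument on $\int \p_z(z\,\th^2 f^2)$ rather than $\int\p_z(\th^2 f^2)$: then $\p_z(z\th^2 f^2) = \th^2 f^2 + z\,\p_z(\th^2 f^2) = \th^2 f^2 + \tfrac{z^2}{2\angt}\th^2 f^2 + 2z\,\th^2 f\p_z f$, and integrating and using that $z\th^2 f^2$ vanishes at both endpoints gives
\be\label{ibp-poincare2}
\int_{\bR^3_+}\th^2 f^2 + \int_{\bR^3_+}\f{z^2}{2\angt}\th^2 f^2 = -2\int_{\bR^3_+} z\,\th^2 f\,\p_z f \le \int_{\bR^3_+}\f{z^2}{2\angt}\th^2 f^2 + 2\angt\int_{\bR^3_+}\th^2(\p_z f)^2,
\ee
where the last step is Young's inequality $2z|f\p_z f|\le \tfrac{z^2}{2\angt}f^2 + 2\angt(\p_z f)^2$. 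The two $\int\tfrac{z^2}{2\angt}\th^2 f^2$ terms cancel, leaving exactly $\|\th f\|_{L^2}^2 \le 2\angt\,\|\th\p_z f\|_{L^2}^2$, which is \eqref{weightpoincare}.

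The argument is essentially elementary, so there is no serious obstacle; the only point requiring a little care is the justification of the integration by parts, i.e. that $z\,\th^2 f^2 = z\exp(z^2/(4\angt))\,\angr^{2n}(\p^\al_h g)^2 \to 0$ as $z\to+\infty$, which is exactly the hypothesis that $g$ (and its tangential derivatives) decays sufficiently fast at $z$-infinity — in practice this holds for the regularized solutions one works with, and one passes to the limit by density, as is standard in this setting. I would state \eqref{ibp-poincare2} cleanly, note the cancellation, and conclude; the boundary term at $z=0$ is harmless since $z\,\th^2 f^2|_{z=0}=0$ regardless of whether $f$ vanishes there.
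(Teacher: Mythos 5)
Your final argument is correct and is essentially the standard proof of this weighted Poincar\'e (Treves-type) inequality: the paper itself omits the proof and refers to Lemma 3.1 of \cite{PZ:2021ARMA} and Lemma 3.3 of \cite{IV:2016ARMA}, which proceed by exactly this integration by parts of $\partial_z\bigl(z\,\theta^2 f^2\bigr)$ followed by Young's inequality with the sharp constant $2\langle t\rangle$, so that the two $\int \tfrac{z^2}{2\langle t\rangle}\theta^2 f^2$ terms cancel. The only revision needed is editorial: delete the false starts in your middle paragraph and present the final computation directly.
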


The inequality \eqref{weightpoincare} is a special case of Treves inequality that can be found in \cite{Hormander:1985SPRINGER}. Proof of Lemma \ref{lpoincare} can be found in \cite[Lemma 3.1]{PZ:2021ARMA}. See also \cite[Lemma 3.3]{IV:2016ARMA}. Here, we omit the details.

Using \eqref{weightpoincare}, we can obtain from \eqref{weight2}
\be\label{weight3}
\f{\rm{d}}{\rm{d}t} X_n+ \f{1}{\s{2\angt}}D_n+\f{3}{4\angt}X_n\leq \dot{\tau}(t)Y_n+\sum_{|\al|=n}\f{\tau^n(t) M_n}{\|\th \angr^n\p^\al_h g\|_{L^2}}\sum^7_{j=1}I^\al_{j}.
\ee

\subsection{Proof of Proposition \ref{proapriori} and the main theorem}

First, we state a proposition concerning on the estimates of the nonlinear terms in \eqref{weight3}.

\begin{proposition}[Estimates of the nonlinear terms]\label{nonest}
For the nonlinear terms in \eqref{weight3}, we have the following estimate
\bes
\bali
&\sum_{n\geqq 0}\sum_{|\al|=n}\f{\tau^n(t) M_n}{\|\th\angr^n\p^\al_h g\|_{L^2}}\sum^7_{j=1}I^\al_{j}\\
 \leq& C\tau^{-2}(t)\lt(\|g\|_{\mathcal{X}_{\tau}}+\angt^{1/4}\|g\|_{\mathcal{D}_{\tau}}\rt)
 \|g\|_{\mathcal{Y}_{\tau}}
 +C\tau^{-2}(t)\lt(\|g\|_{\mathcal{X}_{\tau}}+\angt^{1/4}\|g\|_{\mathcal{D}_{\tau}}\rt)
 \|g\|_{\mathcal{X}_{\tau}}.
\eali
\ees
\end{proposition}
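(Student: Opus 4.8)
The plan is to estimate each of the seven integrals $I^\al_j$ in \eqref{weight3} separately, summing the contributions over $n\geq 0$ and $|\al|=n$. The recurring mechanism is the same as in \cite{IV:2016ARMA}: a Leibniz expansion $\p^\al_h(fg)=\sum_{\beta\leq\al}\binom{\al}{\beta}\p^\beta_h f\,\p^{\al-\beta}_h g$ splits each nonlinear term into a \emph{low-high} piece (few derivatives on the ``velocity'' factor, many on $g$) and a \emph{high-low} piece. The low-high piece is bounded by placing the velocity factor in $L^\infty_hL^\infty_z$ (for terms without a $z$-integral) or $L^\infty_hL^2_z$ (for terms with one), using Lemma \ref{gcontrol2}, and pairing it against $\th\angr^n\p^{\al-\beta}_h\p_z g$ or $\th\angr^n\p^{\al-\beta}_h g$; the high-low piece is bounded symmetrically. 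The combinatorial weight $M_n=(n+1)^4/n!$ together with the factors $(n+1)^2$ coming from the two-dimensional Sobolev embeddings in Lemma \ref{gcontrol2} must be absorbed; this is exactly where the convolution inequality for the sequences $\{M_n\}$ — namely $\sum_{\beta}\binom{\al}{\beta}M_{|\beta|}M_{|\al-\beta|}\lesssim M_{|\al|}$ up to polynomial losses, combined with $\sum_n (n+1)^c\tau^n M_n\|\cdot\|$ being controlled by $\|g\|_{\mathcal X_\tau}$ — produces the one-derivative gain that is encoded in $\|g\|_{\mathcal Y_\tau}$ and the factor $\tau^{-2}$. The $\tau^{-2}$ (rather than $\tau^{-1}$) arises because \emph{two} tangential derivatives are ``spent'': one from the Sobolev embedding used to get the velocity into $L^\infty_h$, and one that becomes the $\mathcal Y_\tau$ gain.

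Concretely: for $I^\al_1=-\int\th\angr^n\p^\al_h(u^rr\p_r g)\,\th\angr^n\p^\al_h g$, the subtle point is that $r\p_r g$ carries the extra weight $\angr$, which is precisely why the norms in \eqref{element} carry $\angr^n$; I would write $\angr^n\p^\al_h(u^rr\p_r g)$, distribute the $r\p_r$ and the $\angr^n$, note $r\angr^{-1}\lesssim 1$, and reduce to bounding $\|\th\angr^{|\al-\beta|+1}\p^{\al-\beta}_h\p_r g\|_{L^2}$ which is controlled by $D_{|\al-\beta|+1}$ or $X_{|\al-\beta|+1}$ (hence an extra derivative, hence $\mathcal Y_\tau$). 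The transport term $I^\al_2=-\int\th\angr^n\p^\al_h(u^z\p_z g)\,\th\angr^n\p^\al_h g$ is handled the same way, with $u^z$ estimated through \eqref{luzesti1}, \eqref{luzesti2} at the cost of the $\angt^{1/4}$ factor (this is the origin of the $\angt^{1/4}\|g\|_{\mathcal D_\tau}$ in the statement). The quadratic terms $I^\al_3$ (namely $(u^r)^2$), together with the lower-order terms $I^\al_4,I^\al_5$ involving $u^z\p_z(z\phi)$, $u^r\phi$, and the $z$-weighted integral terms $I^\al_6,I^\al_7$, are all treated by the same low-high/high-low split; the $z$-integrals are absorbed by noting $\|z\th\cdot\|$-type factors are controlled since $\th=\exp(z^2/8\angt)$ dominates polynomial-in-$z$ weights after shifting a fraction of the Gaussian (as already exploited in Lemma \ref{gcontrol1}), and the explicit $\angt^{-1}$ prefactors on $I^\al_4$--$I^\al_7$ only help. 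For these lower-order terms the output is the $\|g\|_{\mathcal X_\tau}\cdot(\|g\|_{\mathcal X_\tau}+\angt^{1/4}\|g\|_{\mathcal D_\tau})$ piece rather than the $\|g\|_{\mathcal Y_\tau}$ piece, since no derivative needs to be traded.

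I expect the main obstacle to be bookkeeping rather than conceptual: carefully tracking the weights $\angr^n$ versus $\angr^{|\al-\beta|}$ (there is a mismatch $\angr^{n}=\angr^{|\beta|}\angr^{|\al-\beta|}$ that must be shuffled onto the ``low-derivative'' velocity factor without losing control — here one uses $\angr^{|\beta|}|\p^\beta_h u^r|\lesssim\angr^{|\beta|}\cdot(\text{stuff})$ and the boundedness of $\angr^{|\beta|}/\angr^{|\beta|}$, i.e.\ the weight travels with the derivative count, which is why the norms were designed with $\angr^n$ matched to $|\al|=n$), and simultaneously managing the factorial weights $M_n$ through the inequality $\sum_{k=0}^n\binom{n}{k}\frac{M_k M_{n-k}}{M_n}\lesssim 1$ (valid because $M_n=(n+1)^4/n!$) with the extra polynomial factors $(n+1)^2$ from Sobolev absorbed by the slack in $(n+1)^4$. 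A secondary delicate point is that in the low-high regime one sometimes differentiates the velocity factor zero times and must use the $L^\infty$ bounds \eqref{luresti2}, \eqref{luzesti2}, \eqref{lpuzesti2} which already cost $(n+1)^2$ and a sum $\sum_{|\al|=n}^{n+2}$; verifying that after multiplying by $\tau^n M_n$ and summing in $n$ this still telescopes into $\tau^{-2}\|g\|_{\mathcal X_\tau}\|g\|_{\mathcal Y_\tau}$ (the two lost $\tau$-powers and two lost derivative-orders being exactly accounted for) is the crux of the estimate. Once all seven terms are bounded in this form, summing produces precisely the claimed inequality.
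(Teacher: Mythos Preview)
Your overall strategy---Leibniz expansion, a low/high splitting at $k=[n/2]$, placing the low-derivative factor in $L^\infty_h$ via the two-dimensional Sobolev embeddings of Lemma~\ref{gcontrol2}, then summing in $n$---is exactly what the paper does. But the combinatorial tool you invoke is wrong. The inequality $\sum_{\beta\leq\al}\binom{\al}{\beta}M_{|\beta|}M_{|\al-\beta|}\lesssim M_{|\al|}$ ``up to polynomial losses'' is \emph{false}: for $|\al|=n$ and $k=[n/2]$ one has $\binom{n}{k}M_kM_{n-k}/M_n=\binom{n}{k}^{2}(k+1)^{4}(n-k+1)^{4}/(n+1)^{4}\sim 4^{n}$, so no polynomial correction can save it. Likewise, $\sum_{n}(n+1)^{c}\tau^{n}M_{n}\|\cdot\|$ is \emph{not} controlled by $\|g\|_{\mathcal X_\tau}$ for any $c>0$. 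The paper never uses a convolution inequality for the $M_n$; instead it uses the asymmetric identity $M_n\binom{n}{k}=\frac{(n+1)^{4}}{(n-k)!\,k!}$ and bounds $(n+1)^{4}\lesssim(n-k+1)^{4}$ when $k\leq[n/2]$, so that $M_n\binom{n}{k}\lesssim M_{n-k}/k!$ (and symmetrically for $k>[n/2]$). The leftover factor $\tau^{k}/k!$ on the low piece, after the Sobolev embedding shifts the derivative count by up to two, is precisely what produces $\tau^{-2}\sum_{i=0}^{2}X_{k+i}$ (or $Y_{k+i}$). Both powers of $\tau^{-1}$ therefore come from the two derivatives lost to the 2D Sobolev embedding, not from ``one Sobolev derivative plus one $\mathcal Y_\tau$-gain'' as you write.

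A secondary inaccuracy: you assert that $I_3,\dots,I_7$ feed only the $\|g\|_{\mathcal X_\tau}(\|g\|_{\mathcal X_\tau}+\angt^{1/4}\|g\|_{\mathcal D_\tau})$ piece because ``no derivative needs to be traded''. But $I_4$ and $I_7$ contain $u^z=-r\p_r\phi-2\phi$, and the bounds \eqref{luzesti1}, \eqref{luzesti2} for $u^z$ involve $r\p_r g$, which costs one tangential derivative; the paper accordingly obtains $\|g\|_{\mathcal Y_\tau}$ contributions from both (see \eqref{estii4c} and \eqref{estii7c}). This does not upset the final inequality in the Proposition, since those contributions are absorbed by the first product on the right-hand side, but your accounting of which terms land where is off.
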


We postpone the proof of Proposition \ref{nonest} in Section \ref{pnonest} and continue to prove the a priori estimate in Proposition \ref{proapriori}.

\subsubsection*{Proof of Proposition \ref{proapriori}}

From \eqref{weight3}, {by summing on $n\geq 0$}, we get for a uniform constant $C_0$,
\be\label{ecomp1}
\bali
&\f{d}{dt}\|g\|_{\mathcal{X}_{\tau}}+\f{1}{\s{2\angt}}\|g\|_{\mathcal{D}_{\tau}}
+\f{3}{4{\angt}}\|g\|_{\mathcal{X}_{\tau}}\\
\leq& \lt(\dot{\tau}+C_0\tau^{-2}(t)\lt(\|g\|_{\mathcal{X}_{\tau}}+\angt^{1/4}
\|g\|_{\mathcal{D}_{\tau}}\rt)\rt)\|g\|_{\mathcal{Y}_{\tau}}
+C_0\tau^{-2}(t)\lt(\|g\|_{\mathcal{X}_{\tau}}+\angt^{1/4}\|g\|_{\mathcal{D}_{\tau}}\rt)
\|g\|_{\mathcal{X}_{\tau}}.
\eali
\ee
By using \eqref{weightpoincare}, for any small $\dl_1>0$, we have
\bes
\bali
\f{1}{\s{2\angt}}\|g\|_{\mathcal{D}_{\tau}}=& \f{\dl_1}{\s{2\angt}}\|g\|_{\mathcal{D}_{\tau}}+\f{(1-\dl_1)}{\s{2\angt}}\|g\|_{\mathcal{D}_{\tau}}\\
    \geq&\f{\dl_1}{\s{2\angt}}\|g\|_{\mathcal{D}_{\tau}}+\f{(1-\dl_1)}{{2\angt}}
    \|g\|_{\mathcal{X}_{\tau}}\\
    \geq& \f{\dl_1}{\s{2\angt}}\|g\|_{\mathcal{D}_{\tau}}+\f{\dl_1}{{\angt}}
    \|g\|_{\mathcal{X}_{\tau}}+\f{1-3\dl_1}{{2\angt}}\|g\|_{\mathcal{X}_{\tau}}.
\eali
\ees
Inserting the above inequality into \eqref{ecomp1}, we obtain that

\bes
\bali
&\f{d}{dt}\|g\|_{\mathcal{X}_{\tau}}+\f{\f{5}{4}-\f{3}{2}\dl_1}{{\angt}}
\|g\|_{\mathcal{X}_{\tau}}+\lt(\f{\dl_1}{{\angt}}\|g\|_{X_{\tau}}
+\f{\dl_1}{\s{2\angt}}\|g\|_{\mathcal{D}_{\tau}}\rt)\\
 \leq& \lt(\dot{\tau}+C_0\tau^{-2}(t)\lt(\|g\|_{\mathcal{X}_{\tau}}+\angt^{1/4}
 \|g\|_{\mathcal{D}_{\tau}}\rt)\rt)\|g\|_{\mathcal{Y}_{\tau}}
 +C_0\tau^{-2}(t)\lt(\|g\|_{\mathcal{X}_{\tau}}+\angt^{1/4}\|g\|_{\mathcal{D}_{\tau}}\rt)
 \|g\|_{\mathcal{X}_{\tau}}.
\eali
\ees
For $\dl\in(0,1/4]$, by choosing $\dl_1=\dl/3$, we have
\be\label{apriorx}
\bali
&\f{d}{dt}\|g\|_{\mathcal{X}_{\tau}}+\f{\f{5}{4}-\f{1}{2}\dl}{{\angt}}
\|g\|_{\mathcal{X}_{\tau}}+\f{\dl}{6}\lt(\f{1}{{\angt}}\|g\|_{\mathcal{X}_{\tau}}
+\f{1}{\s{\angt}}\|g\|_{\mathcal{D}_{\tau}}\rt)\\
\leq& \lt(\dot{\tau}+C_0\tau^{-2}(t)\lt(\|g\|_{\mathcal{X}_{\tau}}+\angt^{1/4}
\|g\|_{\mathcal{D}_{\tau}}\rt)\rt)\|g\|_{\mathcal{Y}_{\tau}}
 +C_0\tau^{-2}(t)\lt(\|g\|_{\mathcal{X}_{\tau}}+\angt^{1/4}\|g\|_{\mathcal{D}_{\tau}}\rt)
 \|g\|_{\mathcal{X}_{\tau}}.
\eali
\ee
Now, we assume the a prior assumption that for any $t>0$,
\be\label{aprior}
\langle t\rangle^{\f{5}{4}-\dl}\|g\|_{\mathcal{X}_{\tau}}\leq 2\e_0,\q \tau(t) \geq \f{1}{4}\tau_0.
\ee

Using this a priori assumption \eqref{aprior} and by choosing suitable $\tau(t)$ and  sufficiently small $\e_0$, depending on $\tau_0$ and $\dl$, we will show that
\be\label{aprior1}
\langle t\rangle^{\f{5}{4}-\dl}\|g\|_{\mathcal{X}_{\tau}}\leq \e_0,\q \tau(t) \geq \f{1}{2}\tau_0.
\ee
Then continuity argument insure that \eqref{aprior1} stands for any $t>0$.

First, inserting \eqref{aprior} into \eqref{apriorx}, we have
\bes
\bali
&\f{d}{dt}\|g\|_{\mathcal{X}_{\tau}}+\f{\f{5}{4}-\f{1}{2}\dl}{{\angt}}
\|g\|_{\mathcal{X}_{\tau}}+\f{\dl}{6}\lt(\f{1}{{\angt}}\|g\|_{\mathcal{X}_{\tau}}
+\f{1}{\s{\angt}}\|g\|_{\mathcal{D}_{\tau}}\rt)\\
\leq& \lt(\dot{\tau}+C_0\tau^{-2}(t)\lt(\|g\|_{\mathcal{X}_{\tau}}+\angt^{1/4}
\|g\|_{\mathcal{D}_{\tau}}\rt)\rt)\|g\|_{\mathcal{Y}_{\tau}}
+\f{32\e_0C_0}{\tau^2_0\angt^{5/4-\dl}}\lt(\|g\|_{\mathcal{X}_{\tau}}+\angt^{1/4}
\|g\|_{\mathcal{D}_{\tau}}\rt).
\eali
\ees
By choosing $\e_0$ such that $\f{32\e_0 C_0}{\tau^2_0}<\f{\dl}{12}$, then we can have
\be\label{apriorx2}
\bali
&\f{d}{dt}\|g\|_{\mathcal{X}_{\tau}}+\f{\f{5}{4}-\dl}{{\angt}}
\|g\|_{\mathcal{X}_{\tau}}+\f{\dl}{12}\lt(\f{1}{{\angt}}\|g\|_{\mathcal{X}_{\tau}}
+\f{1}{\s{\angt}}\|g\|_{\mathcal{D}_{\tau}}\rt)\\
\leq& \lt(\dot{\tau}+C_0\tau^{-2}(t)\lt(\|g\|_{\mathcal{X}_{\tau}}+\angt^{1/4}
\|g\|_{\mathcal{D}_{\tau}}\rt)\rt)\|g\|_{\mathcal{Y}_{\tau}}.
\eali
\ee
We choose $\tau(t)$ such that
\be\label{analr}
\dot{\tau}+\f{2C_0}{\tau^{2}(t)}\lt(\|g\|_{\mathcal{X}_{\tau}}+\angt^{1/4}
\|g\|_{\mathcal{D}_{\tau}}\rt)=0.
\ee
Then \eqref{apriorx2} indicates that
\be\label{apriorx5}
\bali
&\f{d}{dt}\lt(\angt^{\f{5}{4}-\dl}\|g\|_{\mathcal{X}_{\tau}}\rt)+\f{\dl}{12}
\lt(\angt^{\f{1}{4}-\dl}\|g\|_{\mathcal{X}_{\tau}}+\angt^{\f{3}{4}-\dl}
\|g\|_{\mathcal{D}_{\tau}}\rt)\\
&+\f{C_0\angt^{\f{5}{4}-\dl}}{\tau^{2}(t)}\lt(\|g\|_{\mathcal{X}_{\tau}}+\angt^{1/4}
\|g\|_{\mathcal{D}_{\tau}}\rt)\|g\|_{\mathcal{Y}_{\tau}}\leq0.
\eali
\ee
Integrating \eqref{apriorx5}, we can have
\be\label{apriorx3}
\bali
&\langle t\rangle^{\f{5}{4}-\dl}\|g\|_{\mathcal{X}_{\tau}}+\f{\dl}{12}\int^t_0\lt(\langle s\rangle^{\f{1}{4}-\dl}\|g\|_{\mathcal{X}_{\tau}}+\langle s\rangle^{\f{3}{4}-\dl}\|g\|_{\mathcal{D}_{\tau}}\rt)ds\\
&+C_0\int^t_0\f{\langle s\rangle^{\f{5}{4}-\dl}}{\tau^{2}(s)}\lt(\|g\|_{\mathcal{X}_{\tau}}+\langle s\rangle^{1/4}\|g\|_{\mathcal{D}_{\tau}}\rt)\|g\|_{\mathcal{Y}_{\tau}}ds
\leq  \|g_0\|_{\mathcal{X}_{\tau_0}}\leq \e_0,
\eali
\ee
which implies that
$$
\int^t_0\lt(\langle s\rangle^{\f{1}{4}-\dl}\|g\|_{\mathcal{X}_{\tau}}+\langle s\rangle^{\f{3}{4}-\dl}\|g\|_{\mathcal{D}_{\tau}}\rt)ds\\
\leq \f{12}{\dl}\e_0.
$$
Then from \eqref{analr}, we see that
\bes
\bali
\tau^{3}(t)=&\tau^{3}_0-6C_0\int^t_0\lt(\|g\|_{\mathcal{X}_{\tau}}+\langle s\rangle^{1/4}\|g\|_{D_{\tau}}\rt)ds\\
 \geq& \tau^{3}_0-\f{72C_0\e_0}{\dl}
 \geq \lt(\f{1}{2}\tau_0\rt)^{3},
\eali
\ees
by choosing small $\e_0$. Then by choosing small $\e_0$, depending on $\tau_0$ and $\dl$, we obtain \eqref{aprior1} and \eqref{apriorx3}, which finishes the proof of Proposition \ref{proapriori}.
\subsubsection*{End Proof of Theorem \ref{thmain}}

Combining the local existence and uniqueness of the tangentially analytical solutions in Theorem \ref{thlocal} and continuity argument, we can obtain the validity of Theorem \ref{thmain}.

\section{Technical estimates of the nonlinear terms}\label{pnonest}

In this section, we give the technical estimates for the nonlinear terms on the righthand of \eqref{weight3}. When summing over $n\geq 0$, we can get the following tangentially analytical estimates for the nonlinear terms.

\begin{lemma}[Estimates of the nonlinear terms separately]\label{lnonlinear}
We have the following estimates for the the nonlinear terms on the righthand of \eqref{weight3}.

\be\label{esti1i5c}
\bali
\sum\limits_{n\geq 0}\sum\limits_{|\al|=n}\f{|I^\al_1|\tau^n(t) M_n}{\|\th \angr^n\p^\al_h g\|_{L^2}}\ls \tau^{-2}\lt(\|g\|_{\mathcal{X}_\tau}+\|g\|_{\mathcal{D}_\tau}\rt) \|g\|_{\mathcal{Y}_\tau},
\eali
\ee
\be\label{estii2c}
\bali
\sum\limits_{n\geq 0}\sum\limits_{|\al|=n}\f{|I^\al_2|\tau^n(t) M_n}{\|\th \angr^n\p^\al_h g\|_{L^2}}\ls \tau^{-2}\angt^{\f{1}{4}}\lt(\|g\|_{\mathcal{X}_\tau}+\|g\|_{\mathcal{Y}_\tau}\rt)
\|g\|_{\mathcal{D}_\tau},
\eali
\ee
\be\label{estii3i6c}
\bali
\sum\limits_{n\geq 0}\sum\limits_{|\al|=n}\f{|I^\al_3|\tau^n(t) M_n}{\|\th \angr^n\p^\al_h g\|_{L^2}}+\sum\limits_{n\geq 0}\sum\limits_{|\al|=n}\f{|I^\al_6|\tau^n(t) M_n}{\|\th \angr^n\p^\al_h g\|_{L^2}}\ls \tau^{-2}\lt(\|g\|_{\mathcal{X}_\tau}+\|g\|_{\mathcal{D}_\tau}\rt) \|g\|_{\mathcal{X}_\tau},
\eali
\ee
\be\label{estii4c}
\bali
\sum\limits_{n\geq 0}\sum\limits_{|\al|=n}\f{|I^\al_4|\tau^n(t) M_n}{\|\th \angr^n\p^\al_h g\|_{L^2}}\ls \tau^{-2}\angt^{-1/4}\lt(|g\|_{\mathcal{X}_\tau}+\|g\|_{\mathcal{Y}_\tau}\rt)
\|g\|_{\mathcal{X}_\tau},
\eali
\ee
\be\label{estii5c}
\bali
\sum\limits_{n\geq 0}\sum\limits_{|\al|=n}\f{|I^\al_5|\tau^n(t) M_n}{\|\th \angr^n\p^\al_h g\|_{L^2}}\ls \tau^{-2}\lt(\|g\|_{\mathcal{X}_\tau}+\|g\|_{\mathcal{D}_\tau}\rt) \|g\|_{\mathcal{X}_\tau},
\eali
\ee
\be\label{estii7c}
\bali
\sum\limits_{n\geq 0}\sum\limits_{|\al|=n}\f{|I^\al_7|\tau^n(t) M_n}{\|\th \angr^n\p^\al_h g\|_{L^2}}\ls \tau^{-2}\lt(\angt^{-1/4}\|g\|_{\mathcal{X}_\tau}+\angt^{1/4}\|g\|_{\mathcal{D}_\tau}\rt) \lt(\|g\|_{\mathcal{X}_\tau}+\|g\|_{\mathcal{Y}_\tau}\rt).
\eali
\ee
\end{lemma}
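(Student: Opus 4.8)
\textbf{Proof plan for Lemma \ref{lnonlinear}.}

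The plan is to estimate each of the seven nonlinear terms $I^\al_j$ in \eqref{weight3} by the same general scheme: for a fixed $|\al|=n$, expand $\p^\al_h$ acting on the product inside $I^\al_j$ by the Leibniz rule, split the sum over subindices $0\le|\beta|\le|\al|$ into the ``low--high'' and ``high--low'' regimes, apply the bounds on $u^r,u^z,\phi,\p_z u^r$ in terms of $g$ from Lemma \ref{gcontrol1} and Lemma \ref{gcontrol2} (choosing an admissible weight exponent $\la<1$ so that the various powers of $\th_\la$ combine correctly with $\th$ and $\angr^n$), and then sum the resulting combinatorial series in $n$. The crucial analytic bookkeeping is the standard convolution inequality for the weights $M_n=\frac{(n+1)^4}{n!}$: for $|\beta|=k$, $|\al-\beta|=n-k$ one has $\tau^n M_n \ls \tau^k M_k\cdot \tau^{n-k}M_{n-k}\cdot\text{(polynomial factor)}\cdot$ and, after careful handling of the polynomial prefactors $(k+1)^2$ and $(n-k+1)^2$ coming from the two-dimensional Sobolev embeddings in Lemma \ref{gcontrol2}, one loses exactly two powers of $\tau$, which is the origin of the $\tau^{-2}$ on the right-hand side of every estimate \eqref{esti1i5c}--\eqref{estii7c}. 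The one-derivative gain in the tangential direction (the $\|g\|_{\mathcal{Y}_\tau}$ factor) appears precisely when the top-order derivative $\p^\al_h$ falls on the factor $r\p_r g$ or $\p_z g$ inside the transport-type terms $I^\al_1, I^\al_2, I^\al_7$, since there the worst term has $n+1$ tangential derivatives on $g$ and the extra power of $n$ in the definition of $Y_n$ absorbs it.

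Concretely, I would proceed term by term. For $I^\al_1=-\int \th\angr^n\p^\al_h(u^r r\p_r g)\,\th\angr^n\p^\al_h g$, the dangerous contribution is $u^r\cdot\p^\al_h(r\p_r g)$; here $u^r$ is put in $L^\infty_h L^\infty_z$ via \eqref{luresti2} (controlled by $\|g\|_{\mathcal{X}_\tau}+\|g\|_{\mathcal{D}_\tau}$, hence the first factor on the right of \eqref{esti1i5c}), while $\p^\al_h(r\p_r g)$ is matched against the weight $\angr^{n+1}\p^{\al}_h\p_r g$ which is exactly what the semi-norm $Y_{n+1}$ measures; all remaining Leibniz terms, where at least one tangential derivative lands on $u^r$, are handled with \eqref{luresti-1}/\eqref{luresti1} and contribute to $X_\tau$ but can be absorbed into the same bound. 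The term $I^\al_2$ involving $u^z\p_z g$ is treated identically, using \eqref{luzesti1} (which brings the factor $\angt^{1/4}$, matching \eqref{estii2c}) for $u^z$ and pairing $\p^\al_h\p_z g$ with $D_n=X_n(\p_z g,\tau)$. For $I^\al_3=-\int\th\angr^n\p^\al_h(u^r)^2\,\th\angr^n\p^\al_h g$ and the integral term $I^\al_6$, both factors of $u^r$ are estimated by Lemma \ref{gcontrol2} with the less favourable being put in $L^\infty$ and the other in $L^2$; the $z$-integral in $I^\al_6$ is closed using $\|\th_{\la-1}\|_{L^2_z}\ls\angt^{1/4}$ and the Cauchy--Schwarz trick in $z$ already used repeatedly in Lemma \ref{gcontrol1}, giving \eqref{estii3i6c}. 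Terms $I^\al_4$ (with the extra $\frac{1}{\angt}$ prefactor and $\phi$, estimated by \eqref{lphiesti1} which gains $\angt^{1/2}$, net $\angt^{-1/2}\cdot\angt^{1/4}=\angt^{-1/4}$) and $I^\al_5$ (with the $z$-weight and $\phi$) follow the same pattern, producing \eqref{estii4c} and \eqref{estii5c}. Finally $I^\al_7$, which contains both $\p_z u^r$ (via \eqref{lpuzesti2}, contributing $\angt^{-1/2}\|g\|_{\mathcal{X}_\tau}+\|g\|_{\mathcal{D}_\tau}$) and $u^z$ (via \eqref{luzesti1}, contributing $\angt^{1/4}$), together with the outer $\frac{z}{\angt}$ weight and the $z$-integral, yields the mixed bound \eqref{estii7c} after collecting the time powers $\frac{1}{\angt}\cdot\angt^{1/4}\cdot(\angt^{-1/2}+\cdots)\cdot\angt^{1/4}$.

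The main obstacle I anticipate is the combinatorial summation in $n$ once the polynomial Sobolev prefactors are present: the bounds \eqref{luresti-1}, \eqref{luresti2}, \eqref{luzesti2}, \eqref{lpuzesti2} each cost a factor $(k+1)^2$ or $(n-k+1)^2$, and one must verify that $\sum_{k=0}^{n}(k+1)^2(n-k+1)^2\binom{n}{k}^{-1}\cdot\frac{(n+1)^4}{(k+1)^4(n-k+1)^4}$-type series converge uniformly, i.e. that the choice $M_n=\frac{(n+1)^4}{n!}$ has enough polynomial room to absorb these losses while still producing only a $\tau^{-2}$ (not a worse negative power). A secondary technical point is the consistent choice of the weight parameter $\la$ at each splitting: when two factors carry $\th_{\la}$ and the test function carries $\th$, one needs $\la$ strictly below $1$ but close enough that $\th_\la\cdot\th_{\la}\cdot\th = \th_{2\la+1}$ controls $\th^2\cdot(\text{Gaussian remainder})$ — this is routine but must be tracked so that no weight is lost and Lemma \ref{gcontrol1}/\ref{gcontrol2} apply with their hypothesis $0\le\la<1$. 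Once these two accounting issues are settled, summing \eqref{esti1i5c}--\eqref{estii7c} and using $\angt^{-1/4}\le 1$, $\|g\|_{\mathcal{Y}_\tau}\ls\tau^{-1}\|g\|_{\mathcal{X}_{\beta\tau}}$ gives Proposition \ref{nonest} directly.
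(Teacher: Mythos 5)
Your plan follows essentially the same route as the paper: Leibniz expansion with a low--high/high--low split at $|\beta|=[n/2]$, the product estimates of Lemmas \ref{gcontrol1}--\ref{gcontrol2} with weight exponents $\la<1$ distributed between the two factors, the commutator bound for $r\p_r$, the identity $M_n\binom{n}{k}=\f{(n+1)^4}{(n-k)!\,k!}$ together with the polynomial room in $(n+1)^4$ to absorb the Sobolev losses $(k+1)^2$ at the cost of $\tau^{-2}$, and the discrete Young inequality to perform the final summation in $n$. The only (minor) imprecision is that in \eqref{estii2c} the $\|g\|_{\mathcal{Y}_\tau}$ factor arises from the $r\p_r g$ hidden inside the bound \eqref{luzesti1} for $u^z$ via the divergence-free relation, not from the top derivative falling on $\p_z g$ (which produces only $\|g\|_{\mathcal{D}_\tau}$); since you invoke \eqref{luzesti1} the correct mechanism is nevertheless in place.
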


\begin{proof} Before the proof, we give the following simple claim.

\noindent{\bf Claim.} For any $k\in\bN$, $1\leq p,q\leq +\i$,
\be\label{commutator}
\sum\limits_{|\al|=k}\|\th \angr^{k}\p^\al_h (r\p_rg)\|_{L^p_hL^q_z}\ls \sum\limits_{|\al|=k+1}\|\th \angr^{k+1}\p^\al_h g\|_{L^p_hL^q_z}+k\sum\limits_{|\al|=k}\|\th \angr^{k}\p^\al_h g\|_{L^p_hL^q_z}.
\ee
\begin{proof}[Proof of the claim] Without loss of generality, we assume $k\geq 1$, since the claim is obviously stand for $k=0$. We write $r\p_r =x\p_x+y\p_y:=x_h \p_h$. Then using Leibniz formula, we have
\be\label{com1}
\bali
\lt|\angr^{k}\p^\al_h (r\p_rg)\rt|=&\lt|\angr^{k}\p^\al_h (x_h \p_h g)\rt|\\
         =& \lt|\angr^{k}x_h \p^\al_h\p_h g +\sum_{\beta\leq \al,|\beta|=1}\angr^{k} \lt(\al\atop \beta\rt)\p^{\al-\beta}_h \p_h g \p^\beta_h  x_h\rt|\\
         \leq& \angr^{k+1}| \p^\al_h\p_h g|+2k\angr^{k}|\p^{\al}_h g|.
\eali
\ee
%where we have used the fact that
%\bes
%\lt(\al\atop \beta\rt)\leq \lt(|\al|\atop |\beta|\rt)=\lt(k\atop 1\rt)=k.
%\ees
Then from \eqref{com1}, we can easily obtain \eqref{commutator}.
\end{proof}

In later calculations, for multi-indices $\al,\beta$ with $\beta\leq \al$, we will frequently use
\be\label{indexineq0}
\lt(\al\atop \beta\rt)\leq \lt(|\al|\atop |\beta|\rt),\ \ \
\sum\limits_{|\al|=n}\sum\limits_{|\beta|=k,\beta\leq \al} a_\beta b_{\al-\beta}=\lt(\sum\limits_{|\beta|=k}a_\beta\rt)\lt(\sum\limits_{|\g|=n-k} b_\gamma\rt)
\ee
for all sequences $\{a_\beta\}$ and  $\{b_\gamma\}$.

Now we are ready to prove Lemma \ref{lnonlinear}.

\noindent{\bf Estimate for term $I_1$.}
For the term $I_1$, by using \eqref{indexineq0}, we have
\bes
\bali
\sum_{|\al|=n}\f{|I^n_1|\tau^n(t) M_n}{\|\th \angr^n\p^\al_h g\|_{L^2}}
%&\leq \tau^n(t) M_n\sum_{|\al|=n}\sum_{\beta\leq \al\atop |\beta|\leq [n/2] }\lt(\al\atop \beta\rt) \|\angr^{n-|\beta|}\p^{\al-\beta}u^r\|_{L^2_hL^\i_z} \|\th\angr^{|\beta|}\p^\beta_h (r\p_r g)\|_{L^\i_hL^2_z}\\
%&\q +\tau^n(t) M_n \sum_{|\al|=n}\sum_{\beta\leq \al\atop |\beta|> [n/2] }\lt(\al\atop \beta\rt)\|\angr^{n-|\beta|}\p^{\al-\beta}u^r\|_{L^\i}\|\th\angr^{|\beta|}\p^\beta_h r\p_r g\|_{L^2}\\
\leq&\tau^n(t) M_n\sum^{[n/2]}_{k=0} \lt(n\atop k\rt) \lt(\sum_{|\g|=n-k}\|\angr^{n-k}\p^{\g}_hu^r\|_{L^2_hL^\i_z}\rt)\lt(\sum_{|\beta|=k}\|\th\angr^{k}\p^\beta_h (r\p_r g)\|_{L^\i_hL^2_z}\rt)\\
& +\tau^n(t) M_n\sum^{n}_{k=[n/2]+1} \lt(n\atop k\rt) \lt(\sum_{|\g|=n-k}\|\angr^{n-k}\p^{\g}_hu^r\|_{L^\i}\rt)\lt(\sum_{|\beta|=k}\|\th\angr^{k}\p^\beta_h (r\p_r g)\|_{L^2}\rt).
\eali
\ees
Then by using \eqref{luresti1} and \eqref{luresti2}, and noting that $M_n\lt(n\atop k\rt)=\f{(n+1)^4}{(n-k)!k!}$, we have
{\small
\be\label{termi1}
\bali
&\sum_{|\al|=n}\f{|I^n_1|\tau^n(t) M_n}{\|\th \angr^n\p^\al_h g\|_{L^2}}
%&\ls \tau^n(t) \sum^{[n/2]}_{k=0} \f{(n-k+1)^4}{(n-k)!k!} \sum_{|\g|=n-k}\|\angr^{n-k}\p^{\g}_h (g,\p_z g)\|_{L^2}\sum_{|\beta|=k}\|\th\angr^{k}\p^\beta_h (r\p_r g)\|_{L^\i_hL^2_z}\\
%&\q +\tau^n(t)\sum^{n}_{k=[n/2]+1} \f{(k+1)^4(n-k+1)^2}{(n-k)!k!}\sum^{n-k+2}_{|\g|= n-k}\|\angr^{|\g|}\p^{\g}_h(g,\p_z g)\|_{L^2}\sum_{|\beta|=k}\|\th\angr^{k}\p^\beta_h (r\p_r g)\|_{L^2}\\
\ls \sum^{[n/2]}_{k=0} \lt(X_{n-k}+D_{n-k}\rt) \f{\tau^k}{k!}\sum_{|\beta|=k}\|\th\angr^{k}\p^\beta_h (r\p_r g)\|_{L^\i_hL^2_z}\\
&\q +\tau^{-2}\sum^{n}_{k=[n/2]+1}\sum\limits^{2}_{i=0}\lt(X_{n-k+i}+D_{n-k+i}\rt)\f{\tau^k(k+1)^4}{k!}\sum_{|\beta|=k}\|\th\angr^{k}\p^\beta_h (r\p_r g)\|_{L^2}.
\eali
\ee

}

Then by the same Sobolev embedding estimate as in \eqref{esobur} and using \eqref{commutator}, we can get
\bes
\bali
\sum_{|\beta|=k}\|\th\angr^{k}\p^\beta_h (r\p_r g)\|_{L^\i_hL^2_z}\ls& (k+1)^2\sum^{k+2}_{|\beta|=k}\|\th\angr^{|\beta|}\p^\beta_h (r\p_r g)\|_{L^2}\\
 \ls& (k+1)^2\sum^{k+3}_{|\beta|=k+1}\|\th\angr^{|\beta|}\p^\beta_h g\|_{L^2}+(k+1)^2|\beta|\sum^{k+2}_{|\beta|=k}\|\th\angr^{|\beta|}\p^\beta_h g\|_{L^2}.
\eali
\ees
Then it is not hard to check that
\be\label{ergr1}
\f{\tau^k}{k!}\sum_{|\beta|=k}\|\th\angr^{k}\p^\beta_h (r\p_r g)\|_{L^\i_hL^2_z}\ls \tau^{-2}\sum^{3}_{i=0}{Y_{k+i}},
\ee
where, when $k=i=0,$ we have set $Y_0=0$.\\
Also by using \eqref{commutator}, we can obtain
\be\label{ergr2}
\f{\tau^k(k+1)^4}{k!}\sum_{|\beta|=k}\|\th\angr^{k}\p^\beta_h (r\p_r g)\|_{L^2}\ls Y_{k}+Y_{k+1},
\ee
where we used that $\tau\leq \tau_0$ since later we will chosen $\tau(t)$ to be a decreased function of $t$ .

Inserting \eqref{ergr1} and \eqref{ergr2} into \eqref{termi1}, we can get
\be\label{estitermi1}
\bali
\sum_{|\al|=n}\f{|I^\al_1|\tau^n(t) M_n}{\|\th \angr^n \p^\al_h g\|_{L^2}}
%\leq &\tau^{-2}\sum^{[n/2]}_{k=0} \lt(X_{n-k}+D_{n-k}\rt)\sum^{3}_{i=0}{Y_{k+i}} \\
%&\q +\tau^{-2}\sum^{n}_{k=[n/2]+1}\sum\limits^{2}_{i=0}\lt(X_{n-k+i}+D_{n-k+i}\rt)\lt(Y_{k}+Y_{k+1}\rt)\\
\leq &\tau^{-2}\sum^{n}_{k=0} \sum\limits^{2}_{i=0}\lt(X_{n-k+i}+D_{n-k+i}\rt) \sum^{3}_{i=0}Y_{k+i}.
\eali
\ee
Then by using the following inequality
\be\label{sum}
\sum_{n\geq 0} \sum^n_{k=0} a_{n-k}b_{k}\leq \sum_{k\geq 0} a_k \sum_{j\geq 0} b_j,
\ee
we can get from \eqref{estitermi1},
\bes
\bali
\sum_{n\geq 0}\sum_{|\al|=n}\f{|I^\al_1|\tau^n(t) M_n}{\|\th \angr^n\p^\al_h g\|_{L^2}}\ls \tau^{-2}\sum_{k\geq0}\lt(X_{k}+D_{k}\rt) \sum_{k\geq 0}Y_{k}=\tau^{-2}\lt(\|g\|_{\mathcal{X}_\tau}+\|g\|_{\mathcal{D}_\tau}\rt) \|g\|_{\mathcal{Y}_\tau},
\eali
\ees
which is \eqref{esti1i5c} for term $I_1$.

\noindent{\bf Estimate for term $I_2$.}
Now we come to estimate term $I_2$. By using \eqref{indexineq0}, we have
\be\label{termi2zero}
\bali
\sum_{|\al|=n}\f{|I^\al_2|\tau^n(t) M_n}{\|\th \angr^n\p^\al_h g\|_{L^2}}
%&\leq \tau^n(t) M_n\sum_{|\al|=n}\sum_{\beta\leq \al\atop |\beta|\leq [n/2] }\lt(\al\atop \beta\rt) \|\angr^{n-|\beta|}\p^{\al-\beta}u^z\|_{L^2_hL^\i_z} \|\th\angr^{|\beta|}\p^\beta_h \p_z g\|_{L^\i_hL^2_z}\\
%&\q +\tau^n(t) M_n \sum_{|\al|=n}\sum_{\beta\leq \al\atop |\beta|> [n/2] }\lt(\al\atop \beta\rt)\|\angr^{n-|\beta|}\p^{\al-\beta}u^z\|_{L^\i}\|\th\angr^{|\beta|}\p^\beta_h \p_z g\|_{L^2}\\
\leq& \tau^n(t) M_n\sum^{[n/2]}_{k=0} \lt(n\atop k\rt) \sum_{|\g|=n-k}\|\angr^{n-k}\p^{\g}_hu^z\|_{L^2_hL^\i_z}\sum_{|\beta|=k}\|\th\angr^{k}\p^\beta_h \p_z g\|_{L^\i_hL^2_z}\\
&+\tau^n(t) M_n\sum^{n}_{k=[n/2]+1} \lt(n\atop k\rt)\sum_{|\g|=n-k}\|\angr^{n-k}\p^{\g}_hu^z\|_{L^\i}\sum_{|\beta|=k}\|\th\angr^{k}\p^\beta_h \p_z g\|_{L^2}.
\eali
\ee

Then by using \eqref{luzesti1} and \eqref{luzesti2}, and noting that $M_n\lt(n\atop k\rt)=\f{(n+1)^4}{(n-k)!k!}$, we have
\be\label{termi2}
\bali
&\q\sum_{|\al|=n}\f{|I^\al_2|\tau^n(t) M_n}{\|\th \angr^n\p^\al_h g\|_{L^2}}\\
&\ls \angt^{\f{1}{4}}\tau^n(t) \sum^{[n/2]}_{k=0} \f{(n-k+1)^4}{(n-k)!k!} \sum_{|\g|=n-k}\|\angr^{n-k}\p^{\g}_h (r\p_rg, g)\|_{L^2}\sum_{|\beta|=k}\|\th\angr^{k}\p^\beta_h \p_z g\|_{L^\i_hL^2_z}\\
&\q +\angt^{\f{1}{4}}\tau^n(t)\sum^{n}_{k=[n/2]+1} \f{(k+1)^4(n-k+1)^2}{(n-k)!k!}\sum^{n-k+2}_{|\g|= n-k}\|\angr^{|\g|}\p^{\g}_h(r\p_rg, g)\|_{L^2}\sum_{|\beta|=k}\|\th\angr^{k}\p^\beta_h \p_z g\|_{L^2}.
\eali
\ee
Noting that by using Sobolev embedding, we have
\be\label{termi2f}
\bali
\f{1}{k!}\|\th\angr^{k}\p^\beta_h \p_z g\|_{L^\i_hL^2_z}\ls& \f{(k+1)^2}{k!}\sum^{k+2}_{|\beta|=k}\|\th\angr^{|\beta|}\p^\beta_h \p_z g\|_{L^2}
      \ls  \tau^{-2}\sum^2_{i=0} D_{k+i}.
\eali
\ee
Inserting \eqref{termi2f} into \eqref{termi2}, we obtain

\be\label{estii2s}
\bali
&\q\sum_{|\al|=n}\f{|I^\al_2|\tau^n(t) M_n}{\|\th \angr^n\p^\al_h g\|_{L^2}}\\
&\ls \angt^{1/4}\tau^{-2}\sum^{[n/2]}_{k=0} \f{\tau^{n-k}(n-k+1)^4}{(n-k)!} \sum_{|\g|=n-k}\|\angr^{n-k}\p^{\g}_h (r\p_rg, g)\|_{L^2}\sum^2_{i=0}D_{k+i}\\
&\q +\angt^{1/4}\sum^{n}_{k=[n/2]+1} \f{\tau^{n-k}(n-k+1)^2}{(n-k)!}\sum^{n-k+2}_{|\g|= n-k}\|\angr^{|\g|}\p^{\g}_h(r\p_rg, g)\|_{L^2}D_{k}.
\eali
\ee
We have that
\begin{align}\label{termi2second}
&\f{\tau^{n-k}(n-k+1)^4}{(n-k)!} \sum_{|\g|=n-k}\|\angr^{n-k}\p^{\g}_h (r\p_rg, g)\|_{L^2}\ls X_{n-k}+Y_{n-k+1}+Y_{n-k},\\
\label{termi2third}
&\f{\tau^{n-k}(n-k+1)^2}{(n-k)!}\sum^{n-k+2}_{|\g|= n-k}\|\angr^{|\g|}\p^{\g}_h(r\p_rg, g)\|_{L^2}\leq X_{n-k}+\tau^{-2}\sum^3_{i=0}Y_{n-k+i}.
\end{align}
Inserting the above two inequalities into \eqref{estii2s}, we can obtain

\be\label{estii2third}
\bali
\sum_{|\al|=n}\f{|I^\al_2|\tau^n(t) M_n}{\|\th \angr^n\p^\al_h g\|_{L^2}}\ls&\angt^{1/4}\tau^{-2}\sum^{n}_{k=0} \lt(X_{n-k}+\sum^3_{i=0}Y_{n-k+i}\rt)\sum^2_{i=0}D_{k+i}.
\eali
\ee
Summing \eqref{estii2third} over $n\geq 0$ and using \eqref{sum}, we can obtain \eqref{estii2c}.

\noindent{\bf Estimate for term $I_3$.}
Now we come to estimate term $I_3$. By using \eqref{indexineq0}, we have
\be\label{termi3zero}
\bali
&\q\sum_{|\al|=n}\f{|I^\al_3|\tau^n(t) M_n}{\|\th \angr^n\p^\al_h g\|_{L^2}}\\
%&\leq \tau^n(t) M_n\sum_{|\al|=n}\sum_{\beta\leq \al\atop |\beta|\leq [n/2] }\lt(\al\atop \beta\rt) \|\th_{1/2}\angr^{n-|\beta|}\p^{\al-\beta}u^r\|_{L^2_hL^\i_z} \|\th_{1/2}\angr^{|\beta|}\p^\beta_h u^r\|_{L^\i_hL^2_z}\\
%&\q +\tau^n(t) M_n \sum_{|\al|=n}\sum_{\beta\leq \al\atop |\beta|> [n/2] }\lt(\al\atop \beta\rt)\|\th_{1/2}\angr^{n-|\beta|}\p^{\al-\beta}u^r\|_{L^\i}\|\th_{1/2}\angr^{|\beta|}\p^\beta_h u^r\|_{L^2}\\
&\leq \tau^n(t) M_n\sum^{[n/2]}_{k=0} \lt(n\atop k\rt) \sum_{|\g|=n-k}\|\th_{1/2}\angr^{n-k}\p^{\g}_hu^r\|_{L^2_hL^\i_z}\sum_{|\beta|=k}\|\th_{1/2}\angr^{k}\p^\beta_h u^r\|_{L^\i_hL^2_z}\\
&\q +\tau^n(t) M_n\sum^{n}_{k=[n/2]+1} \lt(n\atop k\rt)\sum_{|\g|=n-k}\|\th_{1/2}\angr^{n-k}\p^{\g}_hu^r\|_{L^\i}\sum_{|\beta|=k}\|\th_{1/2}\angr^{k}\p^\beta_h u^r\|_{L^2}.
\eali
\ee
Then by using \eqref{luresti0} to \eqref{luresti2}, and noting that $M_n\lt(n\atop k\rt)=\f{(n+1)^4}{(n-k)!k!}$, we have
{\small
\be\label{termi3four}
\bali
\sum_{|\al|=n}\f{|I^\al_3|\tau^n(t) M_n}{\|\th \angr^n\p^\al_h g\|_{L^2}}
%&\ls \tau^n(t) \sum^{[n/2]}_{k=0} \f{(n-k+1)^4(k+1)^2}{(n-k)!k!} \sum_{|\g|=n-k}\|\angr^{n-k}\p^{\g}_h (g, \p_zg)\|_{L^2}\sum^{k+2}_{|\beta|=k}\|\th\angr^{|\beta|}\p^\beta_h g\|_{L^2}\\
%&\q +\tau^n(t)\sum^{n}_{k=[n/2]+1} \f{(k+1)^4(n-k+1)^2}{(n-k)!k!}\sum^{n-k+2}_{|\g|= n-k}\|\angr^{|\g|}\p^{\g}_h(g, \p_zg)\|_{L^2}\sum_{|\beta|=k}\|\th\angr^{k}\p^\beta_h g\|_{L^2}\\
\ls& \tau^{-2}\sum^{[n/2]}_{k=0} \lt(X_{n-k}+D_{n-k}\rt)\sum^2_{i=0}X_{k+i} +\tau^{-2}\sum^{n}_{i=[n/2]+1}\sum^2_{i=0}\lt(X_{n-k+i}+D_{n-k+i}\rt) X_k\\
&\ls \tau^{-2}\sum^{n}_{k=0} \sum^2_{i=0}\lt(X_{n-k+i}+D_{n-k+i}\rt) \sum^2_{i=0}X_{k+i}.
\eali
\ee
}
Summing \eqref{termi3four} over $n\geq 0$ and using \eqref{sum}, we can obtain \eqref{estii3i6c} for term $I_3$.

\noindent{\bf Estimate for term $I_4$.}
For the terms $I^n_4$, from \eqref{phig}$_1$, we first have
\bes
\p_z(z\phi) =(1-\f{z^2}{2\angt})\phi +z g.
\ees
Then from \eqref{lphiesti1}, we have, for $|\al|=k$,
\be\label{phil2g}
\bali
\|\th_{\la}\angr^k\p^\al_h \p_z(z\phi) \|_{L^2_z}\leq & \|\th_{\la}\angr^k\p^\al_h \phi \|_{L^2_z}+\|\th_{\la}\f{z^2}{\angt}\angr^k\p^\al_h \phi \|_{L^2_z}+\|\th_{\la}z\angr^k\p^\al_h g \|_{L^2_z}\\
\leq &\s{\angt}\|\th_{\la}\angr^k\p^\al_h g \|_{L^2_z}+\|\th_{\f{1+\la}{2}\al}\angr^k\p^\al_h\phi \|_{L^2_z}+\s{\angt}\|\th_{\f{1+\la}{2}}\angr^k\p^\al_h g \|_{L^2_z}\\
\leq &\s{\angt}\|\th \angr^k\p^\al_h g \|_{L^2_z}.
\eali
\ee
Now we come to estimate term $I_4$. By using \eqref{indexineq0} and \eqref{phil2g}, we have
\bes
\bali
&\q\sum_{|\al|=n}\f{|I^\al_4|\tau^n(t) M_n}{\|\th \angr^n\p^\al_h g\|_{L^2}}\\
%\leq& \angt^{-1}\tau^n(t) M_n\sum_{|\al|=n}\sum_{\beta\leq \al\atop |\beta|\leq [n/2] }\lt(\al\atop \beta\rt) \|\th_{1/2}\angr^{n-|\beta|}\p^{\al-\beta}u^z\|_{L^2_hL^\i_z} \|\th_{1/2}\angr^{|\beta|}\p^\beta_h \p_z(z\phi)\|_{L^\i_hL^2_z}\\
%&\q +\angt^{-1}\tau^n(t) M_n \sum_{|\al|=n}\sum_{\beta\leq \al\atop |\beta|> [n/2] }\lt(\al\atop \beta\rt)\|\th_{1/2}\angr^{n-|\beta|}\p^{\al-\beta}u^z\|_{L^\i}\|\th_{1/2}\angr^{|\beta|}\p^\beta_h \p_z(z\phi)\|_{L^2}\\
&\leq \angt^{-1/2}\tau^n(t) M_n\sum_{|\al|=n}\sum_{\beta\leq \al\atop |\beta|\leq [n/2] }\lt(\al\atop \beta\rt) \|\th_{1/2}\angr^{n-|\beta|}\p^{\al-\beta}u^z\|_{L^2_hL^\i_z} \|\th\angr^{|\beta|}\p^\beta_h g\|_{L^\i_hL^2_z}\\
&\q +\angt^{-1/2}\tau^n(t) M_n \sum_{|\al|=n}\sum_{\beta\leq \al\atop |\beta|> [n/2] }\lt(\al\atop \beta\rt)\|\th_{1/2}\angr^{n-|\beta|}\p^{\al-\beta}u^z\|_{L^\i}\|\th\angr^{|\beta|}\p^\beta_h g\|_{L^2}.
\eali
\ees

Then almost the same estimate as in \eqref{termi2zero} by replacing $\p_z g$ with $g$ indicates a similar estimate as \eqref{estii2third} as follows.

\be\label{termi4}
\bali
\sum_{|\al|=n}\f{|I^\al_4|\tau^n(t) M_n}{\|\th \angr^n\p^\al_h g\|_{L^2}}\ls&\angt^{-1/4}\tau^{-2}\sum^{n}_{k=0} \lt(X_{n-k}+\sum^3_{i=0}Y_{n-k+i}\rt)\sum^2_{i=0}X_{k+i}.
\eali
\ee
%\be
%\bali
%&\f{|I^n_4|}{\|\th_\al(\angr\p_r)^n g\|_{L^2}}\\
%&\leq \angt^{-1/2}\sum^{[n/2]}_{i=0}\lt(n\atop i\rt) \|\th_{\f{1}{2}\al}(\angr\p_r)^{n-i}u^z\|_{L^2_hL^\i_v} \|\th_\al(\angr\p_r)^{i} g\|_{L^\i_hL^2_v}\\
%&\q + \angt^{-1/2}\sum^{n}_{i=[n/2]+1}\lt(n\atop i\rt) \|\th_{\f{1}{2}\al}(\angr\p_r)^{n-i}u^z\|_{L^\i} \|\th_\al(\angr\p_r)^{i}\p_z g\|_{L^2}\\
%&\leq \angt^{-1/4}\sum^{[n/2]}_{i=0}\lt(n\atop i\rt) \|\th_{\al}(\angr\p_r)^{n-i}(r\p_rg,g)\|\lt(\sum^2_{k=0}\|_{L^2} \|\th_\al(\angr\p_r)^{i+k} g\|_{L^2}\rt)\\
%&\q +\angt^{-1/4}\sum^{n}_{i=[n/2]+1}\lt(n\atop i\rt) \lt(\sum^2_{k=0}\|\th_\al(\angr\p_r)^{n-i+k}(r\p_rg,g) \|_{L^2}\rt)\|\th_\al(\angr\p_r)^{i} g\|_{L^2}
%\eali
%\ee
%which indicate that
%
%\be
%\bali
%&\f{|I^n_4|M_n\tau^n}{\|\th_\al(\angr\p_r)^n g\|_{L^2}}\\
%&\leq \angt^{-1/4}\tau^{-2}\sum^{[n/2]}_{i=0} \lt(\t{Y}_{n-i+1}+Y_{n-i}\rt)\lt(\sum^2_{k=0}X_{i+k}\rt)\\
%&\q +\angt^{-1/4}\tau^{-2}\sum^{n}_{i=[n/2]+1} \lt(\sum^{2}_{k=0}(\t{Y}_{n-i+1+k}+Y_{n-i+k})\rt) X_{i}\\
%&\leq \angt^{-1/4}\tau^{-2}\sum^{n}_{0} \lt(\sum^{2}_{k=0}(\t{Y}_{n-i+1+k}+Y_{n-i+k})\rt) X_{i}
%\eali
%\ee
Summing \eqref{termi4} over $n\geq 0$ and using \eqref{sum}, we can obtain \eqref{estii4c}.

\noindent{\bf Estimate for term $I_5$.}
It is easy to see that, from \eqref{lphiesti1},
\be\label{phil2g1}
 \|\th_{\la}\angr^k\p^\al_h (z\phi) \|_{L^2_z}\ls_\la \s{\angt} \|\th_{\f{1+\la}{2}}\angr^k\p^\al_h\phi\|_{L^2_z}\ls_\la \angt \|\th\angr^k\p^\al_h g\|_{L^2_z}.
\ee
By using \eqref{indexineq0} and \eqref{phil2g1}, we have
\bes
\bali
&\q\sum_{|\al|=n}\f{|I^\al_5|\tau^n(t) M_n}{\|\th \angr^n\p^\al_h g\|_{L^2}}\\
%\ls& \angt^{-1}\tau^n(t) M_n\sum_{|\al|=n}\sum_{\beta\leq \al\atop |\beta|\leq [n/2] }\lt(\al\atop \beta\rt) \|\th_{1/2}\angr^{n-|\beta|}\p^{\al-\beta}u^r\|_{L^2_hL^\i_z} \|\th_{1/2}\angr^{|\beta|}\p^\beta_h (z\phi)\|_{L^\i_hL^2_z}\\
%&\q +\angt^{-1}\tau^n(t) M_n \sum_{|\al|=n}\sum_{\beta\leq \al\atop |\beta|> [n/2] }\lt(\al\atop \beta\rt)\|\th_{1/2}\angr^{n-|\beta|}\p^{\al-\beta}u^r\|_{L^\i}\|\th_{1/2}\angr^{|\beta|}\p^\beta_h (z\phi)\|_{L^2}\\
%&\ls\tau^n(t) M_n\sum_{|\al|=n}\sum_{\beta\leq \al\atop |\beta|\leq [n/2] }\lt(\al\atop \beta\rt) \|\th_{1/2}\angr^{n-|\beta|}\p^{\al-\beta}u^r\|_{L^2_hL^\i_z} \|\th\angr^{|\beta|}\p^\beta_h g\|_{L^\i_hL^2_z}\\
%&\q +\tau^n(t) M_n \sum_{|\al|=n}\sum_{\beta\leq \al\atop |\beta|> [n/2] }\lt(\al\atop \beta\rt)\|\th_{1/2}\angr^{n-|\beta|}\p^{\al-\beta}u^r\|_{L^\i}\|\th\angr^{|\beta|}\p^\beta_h g\|_{L^2}\\
&\ls \tau^n(t) M_n\sum^{[n/2]}_{k=0} \lt(n\atop k\rt)\sum_{|\g|=n-k}\|\angr^{n-k}\p^{\g}_hu^r\|_{L^2_hL^\i_z}\sum_{|\beta|=k}\|\th\angr^{k}\p^\beta_h  g\|_{L^\i_hL^2_z}\\
&\q +\tau^n(t) M_n\sum^{n}_{k=[n/2]+1}\lt( n\atop k\rt) \sum_{|\g|=n-k}\|\angr^{n-k}\p^{\g}_hu^r\|_{L^\i}\sum_{|\beta|=k}\|\th\angr^{k}\p^\beta_h  g\|_{L^2}.
\eali
\ees
Then by using \eqref{luresti1} and \eqref{luresti2}, and noting that $M_n\lt(n\atop k\rt)=\f{(n+1)^4}{(n-k)!k!}$, we have
\be\label{termi5}
\bali
&\q\sum_{|\al|=n}\f{|I^\al_5|\tau^n(t) M_n}{\|\th \angr^n\p^\al_h g\|_{L^2}}\\
%&\ls\tau^n(t) \sum^{[n/2]}_{k=0} \f{(n-k+1)^4}{(n-k)!k!} \sum_{|\g|=n-k}\|\angr^{n-k}\p^{\g}_h (g,\p_z g)\|_{L^2}\sum_{|\beta|=k}\|\th\angr^{k}\p^\beta_h g\|_{L^\i_hL^2_z}\\
%&\q +\tau^n(t)\sum^{n}_{k=[n/2]+1} \f{(k+1)^4(n-k+1)^2}{(n-k)!k!}\sum^{n-k+2}_{|\g|= n-k}\|\angr^{|\g|}\p^{\g}_h(g,\p_z g)\|_{L^2}\sum_{|\beta|=k}\|\th\angr^{k}\p^\beta_h g\|_{L^2}\\
&\ls \sum^{[n/2]}_{k=0} \lt(X_{n-k}+D_{n-k}\rt) \f{\tau^k}{k!}\sum_{|\beta|=k}\|\th\angr^{k}\p^\beta_h g\|_{L^\i_hL^2_z} +\tau^{-2}\sum^{n}_{k=[n/2]+1}\sum\limits^{2}_{i=0}\lt(X_{n-k+i}+D_{n-k+i}\rt)X_k.
\eali
\ee
By using Sobolev embedding, it is easy to check that
\bes
\f{\tau^k}{k!}\sum_{|\beta|=k}\|\th\angr^{k}\p^\beta_h g\|_{L^\i_hL^2_z}\ls \tau^{-2}\sum^{2}_{i=0} X_{k+i}.
\ees
Inserting the above inequality into \eqref{termi5}, we can obtain
\be\label{termi5second}
\bali
&\sum_{|\al|=n}\f{|I^\al_5|\tau^n(t) M_n}{\|\th \angr^n\p^\al_h g\|_{L^2}}
&\ls \angt^{-1/2}\tau^{-2}\sum^{n}_{k=0}\sum\limits^{2}_{i=0}\lt(X_{n-k+i}+D_{n-k+i}\rt)\sum^{2}_{i=0} X_{k+i}.
\eali
\ee
Summing \eqref{termi5second} over $n\geq 0$ and using \eqref{sum}, we can obtain \eqref{estii5c}.

\noindent{\bf Estimate for term $I_6$.}
First, we have
\bes
\bali
\f{|I^\al_6|}{\|\th\angr^n\p^\al_h g\|_{L^2}}
\leq& \f{1}{\angt}\|z\th (z)\int^\i_z\angr^n\p^\al_h(u^r)^2(\bar{z})d\bar{z}\|_{L^2}\\
=& \f{1}{\angt}\|z\th_{-1/2}(z)\th_{3/2}(z)\int^\i_z\angr^n\p^\al_h(u^r)^2(\bar{z})d\bar{z}\|_{L^2}\\
\leq&\f{1}{\angt}\|z\th_{-1/2}(z)\|_{L^\i_hL^2_z}\|\th_{3/2}(z)\int^\i_z\angr^n\p^\al_h (u^r)^2(\bar{z})d\bar{z}\|_{L^2_hL^\i_z}\\
\ls&\angt^{-1/4}\|\th_{3/2}(z)\int^\i_z\angr^n\p^\al_h(u^r)^2(\bar{z})d\bar{z}\|_{L^2_hL^\i_z}.
\eali
\ees
While
\bes
\bali
&\left\|\th_{3/2}(z)\int^\i_z\angr^n\p^\al_h(u^r)^2(\bar{z})d\bar{z}\right\|_{L^\i_z}\\
\leq& \sup_{z\geq 0} \lt\{\th_{\f{3}{2}}(z)\lt(\int^\i_z \th_{-\f{7}{2}}(\bar{z})d\bar{z}\rt)^{1/2}\rt\}\|\th_{7/4}(z)\angr^n\p^\al_h(u^r)^2\|_{L^2_z}\\
\leq& \angt^{1/4}\|\th_{7/4}(z)\angr^n\p^\al_h(u^r)^2\|_{L^2_z}.
\eali
\ees
%Here we have used the fact that
%\bes
%\bali
%&\th_{\f{3}{2}}(z)\lt(\int^\i_z \th_{-\f{7}{2}}(\bar{z})d\bar{z}\rt)^{1/2}\\
%\leq &\lt(\int^\i_z \th_{-\f{7}{2}}(\bar{z})\th_{3}(\bar{z})d\bar{z}\rt)^{1/2}=\lt(\int^\i_z \th_{-\f{1}{2}}(\bar{z})d\bar{z}\rt)^{1/2}\leq \angt^{1/4}.
%\eali
%\ees
Then
\be\label{termi6}
\bali
\sum_{|\al|=n}\f{|I^\al_6|\tau(t)M_n}{\|\th (\angr\p_r)^n g\|_{L^2}}\leq&\tau(t)M_n\sum_{|\al|=n}\|\th_{7/4}(z)\angr^{n}\p^\al_h(u^r)^2\|_{L^2}.
\eali
\ee
The rest is the same as $I^\al_3$ in \eqref{termi3zero} by replacing $1/2$ with $7/8$ which indicates \eqref{estii3i6c} for term $I^\al_6$.

\noindent{\bf Estimate for term $I_7$.}
Repeating the proof for \eqref{termi6}, we can get

\bes
\bali
\sum_{|\al|=n}\f{|I^\al_7|\tau(t)M_n}{\|\th (\angr\p_r)^n g\|_{L^2}}\leq&\tau(t)M_n\sum_{|\al|=n}\|\th_{7/4}(z)\angr^{n}\p^\al_h(u^z\p_zu^r)\|_{L^2}.
\eali
\ees
By using \eqref{indexineq0}, we have
\bes
\bali
&\q\sum_{|\al|=n}\f{|I^\al_7|\tau^n(t) M_n}{\|\th \angr^n\p^\al_h g\|_{L^2}}\\
%&\leq \tau^n(t) M_n\sum_{|\al|=n}\sum_{\beta\leq \al\atop |\beta|\leq [n/2] }\lt(\al\atop \beta\rt) \|\th_{7/8}\angr^{n-|\beta|}\p^{\al-\beta}u^z\|_{L^2_hL^\i_z} \|\th_{7/8}\angr^{|\beta|}\p^\beta_h \p_z u^r\|_{L^\i_hL^2_z}\\
%&\q +\tau^n(t) M_n \sum_{|\al|=n}\sum_{\beta\leq \al\atop |\beta|> [n/2] }\lt(\al\atop \beta\rt)\|\th_{7/8}\angr^{n-|\beta|}\p^{\al-\beta}u^z\|_{L^\i}\|\th_{7/8}\angr^{|\beta|}\p^\beta_h \p_z u^r\|_{L^2}\\
&\leq \tau^n(t) M_n\sum^{[n/2]}_{k=0} \lt(n\atop k\rt) \sum_{|\g|=n-k}\|\th_{7/8}\angr^{n-k}\p^{\g}_hu^z\|_{L^2_hL^\i_z}\sum_{|\beta|=k}\|\th_{7/8}\angr^{k}\p^\beta_h \p_z u^r\|_{L^\i_hL^2_z}\\
&\q +\tau^n(t) M_n\sum^{n}_{k=[n/2]+1} \lt(n\atop k\rt)\sum_{|\g|=n-k}\|\th_{7/8}\angr^{n-k}\p^{\g}_hu^z\|_{L^\i}\sum_{|\beta|=k}\|\th_{7/8}\angr^{k}\p^\beta_h \p_z u^r\|_{L^2}.
\eali
\ees
Then by using \eqref{luzesti1} to \eqref{lpuzesti2}, and noting that $M_n\lt(n\atop k\rt)=\f{(n+1)^4}{(n-k)!k!}$, we have
\bes
\bali
&\q\sum_{|\al|=n}\f{|I^\al_7|\tau^n(t) M_n}{\|\th \angr^n\p^\al_h g\|_{L^2}}\\
%%&\ls \angt^{\f{1}{4}}\tau^n(t) \sum^{[n/2]}_{k=0} \lt\{\f{(n-k+1)^4(k+1)^2}{(n-k)!k!} \sum_{|\g|=n-k}\|\angr^{n-k}\p^{\g}_h (r\p_rg, g)\|_{L^2}\times\rt.\\
%% &\qq\qq\qq\lt.\sum^{k+2}_{|\beta|=k}\lt(\angt^{-1/2}\|\th\angr^{|\beta|}\p^\beta_h g\|_{L^2}+\|\th\angr^{|\beta|}\p^\beta_h \p_z g\|_{L^2}\rt)\rt\}\\
%%&\q +\angt^{\f{1}{4}}\tau^n(t)\sum^{n}_{k=[n/2]+1} \lt\{\f{(k+1)^4(n-k+1)^2}{(n-k)!k!}\sum^{n-k+2}_{|\g|= n-k}\|\angr^{|\g|}\p^{\g}_h(r\p_rg, g)\|_{L^2}\times\rt.\\
%%&\qq\qq\qq\lt.\sum_{|\beta|=k}\lt(\angt^{-1/2}\|\th\angr^{|\beta|}\p^\beta_h g\|_{L^2}+\|\th\angr^{|\beta|}\p^\beta_h \p_z g\|_{L^2}\rt)\rt\}\\
&\ls \angt^{\f{1}{4}}\tau^{-2} \sum^{[n/2]}_{k=0} \f{(n-k+1)^4\tau^{n-k}}{(n-k)!} \sum_{|\g|=n-k}\|\angr^{n-k}\p^{\g}_h (r\p_rg, g)\|_{L^2}\sum^{2}_{i=0} \lt(\angt^{-1/2} X_{k+i}+D_{k+i} \rt)\\
&\q +\angt^{\f{1}{4}}\sum^{n}_{k=[n/2]+1} \f{(n-k+1)^2\tau^{n-k}}{(n-k)!}\sum^{n-k+2}_{|\g|= n-k}\|\angr^{|\g|}\p^{\g}_h(r\p_rg, g)\|_{L^2}
\lt(\angt^{-1/2}X_{k}+D_{k}\rt).
\eali
\ees
Then using \eqref{termi2second} and \eqref{termi2third}, we obtain
\be\label{termi7four}
\bali
&\sum_{|\al|=n}\f{|I^\al_7|\tau^n(t) M_n}{\|\th \angr^n\p^\al_h g\|_{L^2}}\ls \angt^{\f{1}{4}}\tau^{-2} \sum^{n}_{k=0} \lt(X_{n-k}+ \sum^3_{i=0}Y_{n-k+i}\rt)\sum^{2}_{i=0} \lt(\angt^{-1/2} X_{k+i}+D_{k+i} \rt).\\
\eali
\ee
Summing \eqref{termi7four} over $n\geq 0$ and using \eqref{sum}, we can obtain \eqref{estii7c}.

\end{proof}

%\begin{appendix}
%\titleformat{\section}[block]{\Large\center\sc}{\sc Appendix: \Alph{section}}{0.5em}{}[]
%
%\section{Computation of the boundary condition}\label{AppA}
%
%
%\end{appendix}

\section*{Acknowledgments}
\addcontentsline{toc}{section}{Acknowledgments}

\q  X. Pan is supported by National Natural Science Foundation of China (No. 12031006, No. 11801268) and C. J. Xu is supported by National Natural Science Foundation of China (No. 12031006) and the Fundamental Research Funds for the Central Universities of China.

\medskip

 {\footnotesize

%{\sc Z. Li: School of Mathematics and Statistics, Nanjing University of Information Science $\&$ Technology, Nanjing 210044, China}
%
%  {\it E-mail address:}  zijinli@nuist.edu.cn
%
%\medskip

 {\sc X. Pan: Department of Mathematics, Nanjing University of Aeronautics and Astronautics, Nanjing 210016, China}

  {\it E-mail address:}  xinghong\_87@nuaa.edu.cn

  \medskip

{\sc C. J. Xu: Department of Mathematics, Nanjing University of Aeronautics and Astronautics, Nanjing 210016, China\
and\
Universit\'e de Rouen, CNRS UMR 6085, Laboratoire de Math\'ematiques,
76801 Saint-Etienne du Rouvray, France}

  {\it E-mail address:}  xuchaojiang@nuaa.edu.cn

}

\begin{thebibliography}{}
\addcontentsline{toc}{section}{References}
\small
\setlength{\itemsep}{-3 pt}

\bibitem{ABC:2021ARXIV}  {\sc D. Albritton, E. Bru\'{e}, M. Colombo}: Non-uniqueness of Leray solutions of the forced Navier-Stokes equations. {\it arXiv:} 2112.03116.


\bibitem{AWXY:2015JAMS}  {\sc R. Alexandre, Y. G. Wang, C. J. Xu and T. Yang}: Well-posedness of the Prandtl equation in Sobolev spaces. {\it J. Amer. Math. Soc.} 28 (2015), no. 3, 745--784.


\bibitem{CPZZ:2020ARMA} {\sc B. Carrillo, X. Pan, Q. S. Zhang and Z. Zhao}: Decay and vanishing of some D-solutions of the Navier-Stokes equations. {\it Arch. Ration. Mech. Anal.} 237 (2020), no. 3, 1383-1419.

\bibitem{CSTY:2009CPDE}  {\sc C. C. Chen, R. M. Strain, T. P. Tsai and H. T. Yau}: Lower bounds on the blow-up rate of the axisymmetric Navier-Stokes equations. II. {\it Comm. Partial Differential Equations} 34 (2009), no. 1-3, 203-232.

\bibitem{DGV:2019ANNPDE} {\sc H. Dietert and D. G\'{e}rard-Varet}: Well-posedness of the Prandtl equations without any structural assumption. {\it Ann. PDE} 5 (2019), no. 1, Paper No. 8, 51 pp.


\bibitem{EE:1997CPAM} {\sc W. E and B. Engquist}: Blowup of solutions of the unsteady Prandtl's equation. {\it Comm. Pure Appl. Math.} 50 (1997), no. 12, 1287--1293.


\bibitem{GN:2011CPAM}  {\sc Y. Guo and T. Nguyen}: A note on Prandtl boundary layers. {\it Comm. Pure Appl. Math.} 64 (2011), no. 10, 1416--1438.

\bibitem{GVD:2010JAMS}  {\sc D. G\'{e}rard-Varet and E. Dormy}: On the ill-posedness of the Prandtl equation. {\it J. Amer. Math. Soc.} 23 (2010), no. 2, 591--609.

\bibitem{GVM:2015ASENS} {\sc D. Gerard-Varet and N. Masmoudi}: Well-posedness for the Prandtl system without analyticity or monotonicity. {\it Ann. Sci. \'{E}c. Norm. Sup\'{e}r.} (4) 48 (2015), no. 6, 1273--1325.

\bibitem{GVN:2012ASYMA} {\sc D. G\'{e}rard-Varet and T. Nguyen}: Remarks on the ill-posedness of the Prandtl equation. {\it Asymptot. Anal.} 77 (2012), no. 1-2, 71--88.

\bibitem{Hormander:1985SPRINGER} {\sc L. H\"{o}rmander}: {\it The analysis of linear partial differential operators. III. Pseudodifferential operators.} Springer-Verlag, Berlin, 1985.


\bibitem{IV:2016ARMA} {\sc M. Ignatova and V. Vicol}: Almost global existence for the Prandtl boundary layer equations. {\it Arch. Ration. Mech. Anal.} 220 (2016), no. 2, 809-848.

\bibitem{KNSS:2009ACTAMATH} {\sc G. Koch, N. Nadirashvili, G. A. Seregin and V. \v{S}ver\'{a}k}: Liouville theorems for the Navier-Stokes equations and applications. {\it Acta Math.} 203 (2009), no. 1, 83-105.

\bibitem{KV:2013CMS} {\sc I. Kukavica and V. Vicol}: On the local existence of analytic solutions to the Prandtl boundary layer equations. {\it Commun. Math. Sci.} 11 (2013), no. 1, 269--292.


\bibitem{LCS:2003SIAM} {\sc M. C. Lombardo, M. Cannone and M. Sammartino}: Well-posedness of the boundary layer equations. {\it SIAM J. Math. Anal.} 35 (2003), no. 4, 987--1004.


\bibitem{LMY:2021ARXIV} {\sc W. Li, N. Masmoudi and T. Yang}: Well-posedness in Gevrey function space for 3D Prandtl equations without Structural Assumption. {\it arXiv}: 2001.10222.



\bibitem{LWY:2016ARMA}  {\sc C. J. Liu and Y. G. Wang and T. Yang}: On the ill-posedness of the Prandtl equations in three-dimensional space. {\it Arch. Ration. Mech. Anal.} 220 (2016), no. 1, 83--108.


\bibitem{LWY:2017ADVANCES}  {\sc C. J. Liu and Y. G. Wang and T. Yang}: A well-posedness theory for the Prandtl equations in three space variables. {\it Adv. Math.} 308 (2017), 1074--1126.

\bibitem{LY:2020JEMS} {\sc W. X. Li and T. Yang}: Well-posedness in Gevrey function spaces for the Prandtl equations with non-degenerate critical points. {\it J. Eur. Math. Soc. (JEMS)} 22 (2020), no. 3, 717--775.

\bibitem{LZ:2020AAM} {\sc X. Lin and T. Zhang}: Almost global existence for the 3D Prandtl boundary layer equations. {\it Acta Appl. Math.} 169 (2020), 383--410.

\bibitem{LW:2009SIAM} {\sc J. G. Liu and W. C. Wang}: Characterization and regularity for axisymmetric solenoidal vector fields with application to Navier-Stokes equation. {\it SIAM J. Math. Anal.} 41 (2009), no. 5, 1825--1850.

\bibitem{MW:2015CPAM} {\sc N. Masmoudi and T. K. Wong}: Local-in-time existence and uniqueness of solutions to the Prandtl equations by energy methods. {\it Comm. Pure Appl. Math.} 68 (2015), no. 10, 1683--1741.

\bibitem{OS:1999AMMC} {\sc O. A. Oleinik and V. N. Samokhin}: {\it Mathematical models in boundary layer theory.} Applied Mathematics and Mathematical Computation, 15. Chapman \& Hall/CRC, Boca Raton, FL, 1999.

\bibitem{Pan:2016JDE} {\sc X. Pan}: Regularity of solutions to axisymmetric Navier-Stokes equations with a slightly supercritical condition. {\it J. Differential Equations} 260 (2016), no. 12, 8485-8529.

\bibitem{Prandtl:1904MATHCONGRESS}  {\sc L. Prandtl}: \"{U}ber Fl\"{u}ssigleitsbewegung bei sehr kleiner Reibung. {\it Verhandlung des III Intern. Math. Kongresses, Heidelberg,} 1904, 484-491.


\bibitem{PZ:2021ARMA}  {\sc M. Paicu and P. Zhang}: Global existence and the decay of solutions to the Prandtl system with small analytic data. {\it Arch. Ration. Mech. Anal.} 241 (2021), no. 1, 403-446.

\bibitem{SC:1998CMP} {\sc M. Sammartino and R. E. Caflisch}: Zero viscosity limit for analytic solutions, of the Navier-Stokes equation on a half-space. I. Existence for Euler and Prandtl equations. {\it Comm. Math. Phys.} 192 (1998), no. 2, 433--461.


\bibitem{WWZ:2021ARXIV} {\sc C. Wang, Y. Wang and P. Zhang}: On the global small solution of 2-D Prandtl system with initial data in the optimal Gevrey class. {\it arXiv}: 2103.00681


\bibitem{XZ:2004ADVANCES} {\sc Z. Xin and L. Zhang}: On the global existence of solutions to the Prandtl's system. {\it Adv. Math.}  181 (2004), no. 1, 88--133.

\bibitem{XZ:2017JDE} {\sc C. J. Xu and X. Zhang}: Long time well-posedness of Prandtl equations in Sobolev space. {\it J. Differential Equations} 263 (2017), no. 12, 8749--8803.

\bibitem{ZZ:2016JFA} {\sc P. Zhang and Z. Zhang}: Long time well-posedness of Prandtl system with small and analytic initial data. {\it J. Funct. Anal.} 270 (2016), no. 7, 2591--2615.
\end{thebibliography}
\end{document}